\documentclass[11pt,a4paper]{amsart}
\usepackage[T1]{fontenc}
\usepackage[utf8]{inputenc}
\usepackage[english]{babel}
\usepackage{lmodern,microtype}
\usepackage[a4paper,margin=27mm,headheight=15pt]{geometry}
\usepackage{mathtools,amssymb,amsthm,mathrsfs}
\usepackage{booktabs,tabularx,array,enumitem}
\usepackage[hidelinks]{hyperref}
\hypersetup{
  pdftitle={Reconstruction of Finite Posets of Width Three},
  pdfauthor={Patrizio Cintioli},
  pdfsubject={Reconstruction of finite posets of width three},
  pdfkeywords={finite poset, finite ordered set, width three, reconstruction,
               maximal cards, orbital balance}
}
\setlist[enumerate]{itemsep=3pt,topsep=4pt}
\setlist[itemize]{itemsep=3pt,topsep=4pt}
\newtheorem{teorema}{Theorem}[section]
\newtheorem{lemma}[teorema]{Lemma}

\newtheorem{corollario}[teorema]{Corollary}
\theoremstyle{definition}
\newtheorem{definizione}[teorema]{Definition}
\newtheorem{esterno}{Reconstruction result}

\theoremstyle{remark}
\newtheorem{osservazione}[teorema]{Remark}
\DeclareMathOperator{\Aut}{Aut}
\DeclareMathOperator{\Max}{Max}

\DeclareMathOperator{\Idl}{Idl}
\DeclareMathOperator{\rk}{rk}

\newcommand{\N}{\mathbb N}
\newcommand{\Z}{\mathbb Z}
\newcommand{\Deck}{\mathcal D}
\newcommand{\MD}{\mathcal M}
\newcommand{\MMD}{\mathcal M^{*}}
\newcommand{\mset}[1]{\{\!\{#1\}\!\}}
\newcommand{\Orb}{\mathscr O}
\newcommand{\BO}{\mathrm{BO}}
\newcommand{\pt}[2]{\tau(#1,#2)}

\numberwithin{equation}{section}

\title[Reconstruction of Finite Posets of Width Three]
{Reconstruction of Finite Posets of Width Three}
\author{Patrizio Cintioli}
\address{Mathematics Division, School of Science and Technology,
University of Camerino, 62032 Camerino, Italy}
\email{patrizio.cintioli@unicam.it}

\subjclass[2020]{Primary 06A07; Secondary 05C60, 05E18}

\keywords{finite poset, finite ordered set, reconstruction,
width three, marked maximal deck, orbital balance}

\begin{document}

\begin{abstract}
We prove that every finite ordered set of width at most three and
cardinality at least four is reconstructible from its deck.
This settles the width-three part of an open problem posed
by Schröder in \emph{Ordered Sets}; the separate question
concerning ordered sets of height one is not addressed here.
The new ingredient is a criterion for recovering the marked maximal
deck of an ordered set with exactly three maximal elements from its
unmarked maximal deck and the isomorphism type of the suborder
obtained by deleting all maximal elements. We express the criterion
as an orbital balance condition on the attachment ideals in this
common base. Necessity follows by double counting; sufficiency uses
the structure of automorphism actions in width at most three.
The full-deck theorem then follows from reconstruction results of
Schröder concerning maximal cards and marked maximal cards.
A local reattachment argument correcting one intermediate step in
the two-maximal-element case is included in an appendix.
\end{abstract}

\maketitle

\section{Introduction and main theorem}
\label{sec:intro}

A reconstruction problem asks how much of a finite structure survives
when one deletes a single point in every possible way. Instead of being
given the structure itself, one is given the collection of all of its
one-point deletions and asks whether the original object can be recovered
up to isomorphism.  This question originated in graph theory, where it is
traditionally traced back to Kelly and Ulam in the 1940s; the familiar language of
``cards'' was subsequently popularized by Harary. The analogous question for ordered
sets asks whether the global order can be recovered, up to isomorphism,
when one is given all one-point-deleted suborders without labels. For
historical background on reconstruction and its passage from graphs to
ordered sets, see \cite[Remark~4 following Section~1.5]{S16} and
\cite{R05}.

From a combinatorial point of view, the reconstruction problem is
a local-to-global question: each
card has forgotten one point and all information involving that point,
while the deck retains the overlapping information supplied by all
deletions. Two broad approaches have guided work on the problem. One is
to prove reconstructibility for successively larger classes of ordered
sets; the other is to recover sufficiently many invariants from the
deck until the original order is forced. Both viewpoints will appear
below. In the width-three case, the main difficulty is not the recovery
of elementary numerical parameters, but the consistent identification
of how the maximal elements are attached to the part of the order that
survives all maximal deletions.

For a finite ordered set $P$, a \emph{card} is a suborder $P\setminus\{x\}$.
Its deck is the multiset
\[
 \Deck(P)=\mset{[P\setminus\{x\}]:x\in P},
\]
where $[X]$ denotes the isomorphism type of $X$. The cards are unlabeled;
multiplicities are retained. The ordered set $P$ is \emph{reconstructible}
if every finite ordered set $Q$ with $\Deck(Q)=\Deck(P)$ is isomorphic
to $P$. For a survey of reconstruction problems and results for ordered
sets, see \cite{R05}.

The width-three reconstruction problem was already investigated
by Schröder in \cite{S03}. He later explicitly listed the
reconstructibility of ordered sets of width three, together with
the separate height-one case, among the open problems in
\cite[Chapter~2, Remarks and Open Problems, item~9, p.~50]{S16}.
Theorem~\ref{thm:target} settles the width-three part of this
problem. The question for ordered sets of height one and arbitrary
width is not addressed here. Our proof follows the approach
suggested in \cite{S16}, using the reconstruction tools of
\cite{S03} and the subsequent results of \cite{S25}.

Using the reconstruction results specified in
Section~\ref{sec:external}, we obtain the following result.

\begin{teorema}[Main theorem]\label{thm:target}
Let $P$ be a finite ordered set with $|P|\ge4$ and $w(P)\le3$. For every finite ordered set $Q$, the equality $\Deck(P)=\Deck(Q)$ implies $P\cong Q$.
\end{teorema}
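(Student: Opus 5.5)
The strategy is to reduce the theorem to recovering the marked maximal deck, and then invoke Schröder's reconstruction results from Section~\ref{sec:external}. Width, height, the numbers of minimal and maximal elements, and the maximal deck $\MD(P)=\mset{[P\setminus\{m\}]:m\in\Max(P)}$ are known to be recoverable from $\Deck(P)$ when $|P|\ge4$. So if $\Deck(Q)=\Deck(P)$, then $w(Q)\le3$, $|\Max(Q)|=|\Max(P)|$, and $\MD(Q)=\MD(P)$. By duality, and by choosing whichever of $P$ or its dual has fewer maximal elements, we may organise a case split on $k=|\Max(P)|\in\{1,2,3\}$. The external results handle $k=1$ (a unique maximum) and $k=2$; for $k=2$ the appendix's local reattachment argument repairs the one intermediate step. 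So the substantive case is $k=3$, where neither $P$ nor its dual has fewer than three maximal elements.

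For $k=3$, set $B=P\setminus\Max(P)$. Its isomorphism type can be read off from the deck: delete the three maximal elements one at a time, using cards in the maximal deck that again have three maximal elements, or use a recovered invariant. Each maximal element $m$ is attached to $B$ through the down-set $I_m={\downarrow}m\cap B$, which is an ideal of $B$. The marked maximal deck records, for each card $P\setminus\{m\}$, the position of $B$ inside it, that is, the pair of ideals $\{I_{m'},I_{m''}\}$ up to $\Aut(B)$. The unmarked maximal deck records only the isomorphism types of the posets $B\cup\{m',m''\}$. The key criterion I would prove has two directions:

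\begin{enumerate}
\item \emph{Necessity.} If two unordered triples of ideals give the same unmarked deck, a double count of pairs (card, embedding of $B$) shows that the $\Aut(B)$-orbits of the ideal pairs are balanced. The number of embeddings of $B$ into each card, weighted by orbit sizes, must agree.
\item \emph{Sufficiency.} The balance forces the multiset of orbit-pairs to coincide. This step uses that width at most three severely restricts $\Aut(B)$: its action on ideals factors through permutations of at most three chains and their symmetric blocks.
\end{enumerate}

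Having recovered the marked maximal deck $\MMD(P)=\MMD(Q)$, I would apply Schröder's theorem that the marked maximal deck together with the base determines $P$. This gives $P\cong Q$.

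I expect the sufficiency direction to be the main obstacle. A card $B\cup\{m',m''\}$ may contain several copies of $B$, so the marking is genuinely ambiguous. One has to exclude two different triples of ideals producing the same unmarked cards. My first attempt would be a case analysis according to whether the three ideals are pairwise distinct, and according to the orbit structure of $\Aut(B)$ on the lattice $\Idl(B)$. In each case I would use the balance equations to match the orbits one by one, starting from an orbit of extremal size, such as the largest ideal or the ideal with the largest stabiliser. I would then remove it and proceed inductively.
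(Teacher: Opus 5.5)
Your outer reduction matches the paper's: settle the cases with fewer than three maximal elements (and width $\le2$, disconnected sets) by known results, obtain $[B_P]$ and $\MD(P)$ from the deck, recover $\MMD(P)$, and conclude with Schröder. The gap is the middle step, which carries the whole new content.

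\begin{itemize}
\item Your balance condition is never defined. Any count computed from the unmarked cards, such as a weighted count of embeddings of $B$, is automatically the same for $P$ and $Q$, so it cannot single out a marking.
\item The plan of matching orbits one at a time from an extremal one and inducting cannot work. There are only three ideals. The real obstruction is that bookkeeping of individual orbits forgets the relative position of the two ideals on a card, and that relative position is exactly what a colored type records.
\item Turning pairwise orbit coincidences into one automorphism of $B$ fails in general. On a four-point antichain, $(\{1,2\},\{1,3\},\{1,4\})$ and $(\{1,2\},\{1,3\},\{2,3\})$ are pairwise but not jointly $S_4$-equivalent. So a specific width-three mechanism is indispensable.
\item Your description of $\Aut(B)$ as permuting at most three chains does not supply that mechanism, since automorphisms need not respect any chain decomposition. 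The paper uses that orbits are antichains of size at most three and that the action is a product of independent diagonal $S_2$, $S_3$, $C_3$ factors (Lemma~\ref{lem:diagonal}). From this, pairwise compatibility of families of subsets implies simultaneous compatibility (Lemma~\ref{lem:pairwise}).
\end{itemize}

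You also lack a reduction to a single ordered set; comparing two triples of ideals abstractly gives no foothold. The paper transports the authentic marking of $Q$ along card isomorphisms to candidate colorings of the cards of $P$. Their orbital margins are sums of the two individual orbits, not pair orbits. These margins equal $Q$'s authentic margins, so they satisfy $\BO$ (even sum, each margin at most half of it) trivially. Sufficiency then becomes a statement about $P$ alone, and the structure of $P$ does the work; $\BO$ by itself does not certify arbitrary colored data. The steps are:
\begin{itemize}
\item A wrong coloring of $C_i$ can only exchange the new maximal element $r_i$ with an old $m_j$ with $\ell_P(r_i)=d_j<d_i$ \eqref{eq:sizeineq}. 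Hence the card with minimal $d_i$ is forced and at most four assignments occur.
\item Equal margins on one card force equal colored types (Lemma~\ref{lem:local}).
\item A single effective modification violates $\BO$ (Lemma~\ref{lem:onemod}).
\item Two modifications need the transitivity and extension argument of Case~I, and the ideal-cardinality classes plus swap completion of Case~II, in Theorem~\ref{thm:BO}.
\end{itemize}

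Two smaller points:
\begin{itemize}
\item Schröder's Theorem~6.6 uses the full deck together with the marked maximal deck, not the marked deck and the base. This is harmless here because $\Deck(P)=\Deck(Q)$ is assumed, but state it correctly.
\item Recovering $[B_P]$ by deleting maximal elements from cards with three maximal elements presupposes the very marking you are trying to find. You need Result~\ref{ext:base}, which requires connectedness, so split off disconnected sets first.
\end{itemize}
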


The new step concerns an ordered set $P$ of width at most three with
exactly three maximal elements. Write
\[
 B_P=P\setminus\Max(P).
\]
The \emph{unmarked maximal deck} records the types of the three cards
$P-m$, for $m\in\Max(P)$, with multiplicities. The \emph{marked maximal
deck} also distinguishes $B_P$ within each card. We show that the
unmarked maximal deck and $[B_P]$ determine this marked deck. This is a
statement about the data associated with an actual ordered set; it
does not assume that those data have already been recovered from the
full deck.

To recover the marked deck, we compare candidate markings through the
orbits of their attachment ideals under $\Aut(B_P)$. A double-counting
identity gives the necessary \emph{orbital balance} condition. Its
sufficiency is Theorem~\ref{thm:BO}; the recovery statement follows in
Corollary~\ref{cor:recovery}. The group-theoretic step uses small
diagonal factors to pass from pairwise to simultaneous compatibility.
The width bound is essential to this step: the compatibility principle
already fails on a four-point antichain
(Remark~\ref{rem:widthessential}).

Theorem~6.6 of \cite{S03} shows that an ordered set of width three is
reconstructible from its full deck together with its marked maximal
deck. Corollary~8.1 of \cite{S25} shows that, in width three, the
unmarked maximal and minimal decks are reconstructible. Finally,
Theorem~7.4 of \cite{S03} settles the case of two maximal elements.
Thus the remaining three-maximal-element problem is to pass from the
unmarked maximal deck, together with the common base, to the marked
maximal deck. This is the role of orbital balance.

The two parts of the argument meet in
Section~\ref{sec:reconstruction}: the reconstruction results provide
$[B_P]$ and the unmarked maximal deck, orbital balance recovers the
marked maximal deck, and the full deck together with that marked deck
determines $P$. The full deck is retained in the last step; we do not
claim that the marked maximal deck alone determines every ordered set
of width three.

Sections~\ref{sec:balance}--\ref{sec:sufficiency} develop the
orbital-balance argument independently of the reconstruction results.
Appendix~\ref{app:repair} records a local rank-based reattachment
argument related to the two-maximal-element case.

\section{Conventions and background results}
\label{sec:external}

We first fix the conventions used throughout the paper and then record
the previously known reconstruction results needed for the final passage
from the orbital-balance theorem to reconstruction from the full deck.
The internal argument in Sections~\ref{sec:balance}--\ref{sec:sufficiency}
is independent of these results.

\subsection{Conventions}
All ordered sets are finite unless explicitly stated otherwise, and all suborders are induced. An isomorphism preserves and reflects the order. We write $X-x$ for $X\setminus\{x\}$ and $X-S$ for $X\setminus S$. When $X$ and a point $x\notin X$ lie in a specified ambient ordered set,
$X+x$ denotes the induced suborder on $X\cup\{x\}$; the notation is iterated when several points are added. It does not denote an ordinal sum.
An empty base is allowed in the structural lemmas.

The width $w(X)$ is the maximum cardinality of an antichain;
we put $w(\varnothing)=0$. An ordered set is connected when
its comparability graph (the undirected graph joining distinct
comparable elements) is connected. Equivalently, its Hasse
diagram is connected when edge orientations are ignored.

The rank $\rk_X(x)$ is the maximum length of a chain ending
at $x$, where length is the number of elements minus one. Put
\[
 \downarrow_Xx=\{y\in X:y\le x\},\qquad
 \ell_X(x)=|\downarrow_Xx|.
\]
Thus $\ell_X(x)$ is the cardinality of the principal ideal
of $x$, including $x$ itself; it need not equal
$\rk_X(x)+1$.

A \emph{down-set} $D\subseteq X$ satisfies
$x\in D,\ y\le x\Rightarrow y\in D$;
an \emph{up-set} is defined dually.
A subset is said to be preserved by an automorphism if it is
mapped onto itself setwise; its elements need not be fixed
individually. The attachment ideals used below are
\emph{strict predecessor sets}, excluding the attached
maximal element itself.

A \emph{pointed ordered set} is a pair $(X,x)$ with $x\in X$; we write $\pt{X}{x}$ for its isomorphism type, requiring isomorphisms to preserve the distinguished point. Equalities of pointed types are not meant as equalities of labeled ordered sets.

Multisets may be represented by formal sums with coefficients in $\N=\{0,1,\ldots\}$. The order between such sums is coefficientwise. When common contributions are canceled, we work equivalently in the free abelian group with coefficients in $\Z$.

\subsection{Reconstruction results used}

In the following reconstruction results, finiteness and cardinality at
least four are always understood. We state only the forms used here.
Numbered references to \cite{S25} follow arXiv:2312.15135v3.

\begin{esterno}[Reconstructible parameters]\label{ext:parameters}
Width, the number of maximal elements, and connectedness are determined by the full deck.
\end{esterno}
\noindent References: \cite[Propositions~2.23 and~2.51, Corollary~3.9]{S16}.

\begin{esterno}[Classes already reconstructible]\label{ext:easy}
Disconnected ordered sets, ordered sets with a greatest or a least element, and ordered sets of width at most two are reconstructible.
\end{esterno}
\noindent References: \cite[Propositions~1.37 and~2.51, Exercise~3-8]{S16}, with duality for a greatest element; for width two, the original result is \cite{KR96}.

\begin{esterno}[Type of the base]\label{ext:base}
For a connected ordered set $P$, the type of $B_P=P\setminus\Max(P)$ is determined by the full deck.
\end{esterno}

\noindent References: \cite[Scholium~3.11]{S16}, in its form with a
reconstruction alternative. It identifies a maximal card together with
the marked subset $B_P$, or already concludes that $P$ is
reconstructible; either alternative determines the base type. The dual
of \cite[Lemma~4.1]{S25} gives another formulation of the same
base-type conclusion.

\begin{esterno}[Unmarked maximal deck]\label{ext:unmarked}
For an ordered set of width three, the multiset
\[
 \MD(P)=\mset{[P-m]:m\in\Max(P)}
\]
is determined by the full deck.
\end{esterno}

\noindent Reference: \cite[Corollary~8.1]{S25}. The corollary
reconstructs the unmarked minimal and maximal decks separately; here we
use its maximal-deck conclusion, with multiplicities.

\begin{esterno}[Two maximal elements]\label{ext:two}
An ordered set of width three with two maximal elements is reconstructible from the full deck.
\end{esterno}

\noindent Reference: \cite[Theorem~7.4, p.~327]{S03}. Appendix
\ref{app:repair} discusses one intermediate assertion in the proof of
Proposition~7.3, Subcase~1.1, and replaces it by a rank-by-rank
reattachment argument. The statement of Theorem~7.4 is not altered.

For the last reconstruction result, distinguish the subset $B_P$ on each maximal card $P-m$. The \emph{marked maximal deck} is
\begin{equation}\label{eq:markeddeck}
 \MMD(P)=\mset{[(P-m,B_P)]:m\in\Max(P)}.
\end{equation}
An isomorphism of pairs maps the distinguished subset onto the distinguished subset, without necessarily fixing its individual points. This is the convention of the color class \emph{no} in \cite[Definition~3.3, p.~304]{S03}; distinguishing instead the surviving original maximal elements is equivalent.

\begin{esterno}[Full deck and marked deck]\label{ext:marked}
If $P$ has width three and $Q$ is a finite ordered set such that
\[
 \Deck(P)=\Deck(Q),\qquad \MMD(P)=\MMD(Q),
\]
then $P\cong Q$.
\end{esterno}

\noindent Reference: \cite[Theorem~6.6, p.~320]{S03}. The full deck
remains part of the hypothesis; the marked maximal deck alone is not
being asserted to determine the ordered set.

\begin{osservazione}[Use of the reconstruction results]
The proof of Corollary~8.1 of \cite{S25} uses the two-maximal-element
case \cite[Theorem~7.4]{S03}. We do not assume any theorem that already
reconstructs all ordered sets of width three. Results
\ref{ext:parameters}--\ref{ext:marked} are used only in
Section~\ref{sec:reconstruction}; the structural arguments in
Sections~\ref{sec:balance}--\ref{sec:sufficiency} are independent of
them.

Appendix~\ref{app:repair} concerns only one intermediate step in the
proof of Proposition~7.3 of \cite{S03}. It is not used in the proof of
the orbital-balance theorem.
\end{osservazione}

\section{Candidate colorings and orbital balance}
\label{sec:balance}

We now turn to the case of an ordered set with exactly three maximal
elements. The aim is to encode a candidate coloring of each maximal
card through the automorphism orbits of its attachment ideals in the
common base, and then to formulate a compatibility condition across the
three cards. We begin with the elementary deletion facts needed for
this encoding.

\subsection{Elementary facts about deletions}
\begin{lemma}[Ideals and upper deletions]\label{lem:upset}
If $V$ is an up-set of $X$ and $x\in X\setminus V$, then
\[
 \downarrow_{X\setminus V}x=\downarrow_Xx.
\]
Moreover, $x<y$ implies $\ell_X(x)<\ell_X(y)$; hence every class
$\{x\in X:\ell_X(x)=s\}$ is an antichain.
\end{lemma}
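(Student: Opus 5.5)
The plan is to prove the two assertions separately, each straight from the definitions of an up-set and of $\ell_X$; no earlier result is needed.

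For the first assertion, fix an up-set $V$ of $X$ and a point $x\in X\setminus V$. Since suborders are induced, the order on $X\setminus V$ is the restriction of the order on $X$. Hence $\downarrow_{X\setminus V}x=\{y\in X\setminus V: y\le x\}=\downarrow_Xx\setminus V$, so it suffices to show that $\downarrow_Xx\cap V=\varnothing$. Suppose some $y\le x$ lies in $V$. Because $V$ is an up-set, $x\in V$, which contradicts the choice of $x$. So the two principal ideals coincide as sets and also as induced suborders.

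For the second assertion, let $x<y$ in $X$. Transitivity gives $\downarrow_Xx\subseteq\downarrow_Xy$. The point $y$ lies in $\downarrow_Xy$, and $y\notin\downarrow_Xx$: otherwise $y\le x$ together with $x<y$ would give $x=y$ by antisymmetry. The inclusion is therefore proper, and finiteness gives $\ell_X(x)<\ell_X(y)$. For the antichain consequence, take two distinct elements $x,y$ with $\ell_X(x)=\ell_X(y)=s$. If they were comparable, say $x<y$, we would get $\ell_X(x)<\ell_X(y)$, a contradiction. So each level set $\{x\in X:\ell_X(x)=s\}$ is an antichain.

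There is no real obstacle here. The only point to keep in view is that the first identity depends on suborders being induced and on $V$ being an up-set rather than an arbitrary subset. That is exactly what will let principal ideals, and hence the invariant $\ell$, be computed unchanged after deleting maximal elements later in the paper, since $\Max(P)$ is an up-set.
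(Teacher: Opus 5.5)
Your proof is correct and takes the same route as the paper: upward closure of $V$ rules out any $z\le x$ lying in $V$, and $x<y$ gives a proper inclusion $\downarrow_Xx\subsetneq\downarrow_Xy$, which yields both the strict inequality and the antichain conclusion. Your write-up is simply a more detailed version of the paper's two-line argument.
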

\begin{proof}
If $z\le x$ belonged to $V$, upward closure would imply $x\in V$. This proves the first assertion. For the second, $x<y$ implies $\downarrow_Xx\subsetneq\downarrow_Xy$.
\end{proof}

Assume now that $w(P)\le3$ and $\Max(P)=\{m_1,m_2,m_3\}$. Write
\begin{equation}\label{eq:setup}
 B=P\setminus\Max(P),\qquad C_i=P-m_i,\qquad
 D_i=\{b\in B:b<m_i\},\qquad G=\Aut(B).
\end{equation}
The $D_i$ are down-sets, possibly equal, and $D_1\cup D_2\cup D_3=B$.

\begin{lemma}[New maximal element]\label{lem:newmax}
The card $C_i$ has two or three maximal elements. If it has three, then besides the two surviving original maximal elements there is a unique point $r_i\in B$, and
\begin{equation}\label{eq:uniqueupper}
 \{x\in P:r_i<x\}=\{m_i\}.
\end{equation}
\end{lemma}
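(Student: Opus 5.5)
We determine $\Max(C_i)$ directly, using that $C_i=P-m_i$ is an induced suborder and that $\Max(P)=\{m_1,m_2,m_3\}$.

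\emph{The two surviving maxima remain maximal.} For $j\neq i$, the point $m_j$ is maximal in $P$. It therefore stays maximal in the suborder $C_i$, so $\Max(C_i)\supseteq\{m_j,m_k\}$, where $\{i,j,k\}=\{1,2,3\}$.

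\emph{Any further maximal element lies in $B$ and lies only below $m_i$.} Every element of $\Max(C_i)$ other than $m_j,m_k$ lies in $B$. Suppose $r\in B$ is maximal in $C_i$. Since $r$ is not maximal in $P$, the set $\{x\in P:r<x\}$ is nonempty. Any $x>r$ other than $m_i$ would lie in $C_i$ and contradict the maximality of $r$ there. Hence $\{x\in P:r<x\}=\{m_i\}$, which is \eqref{eq:uniqueupper}.

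\emph{There is at most one such point.} Let $r,r'$ be two points of $B$ with this property. If $r<r'$, then $r'$ would be a strict upper bound of $r$ different from $m_i$, which is impossible; the same argument excludes $r'<r$. So $r,r'$ are incomparable. Moreover neither lies below $m_j$ or $m_k$, so $\{r,r',m_j,m_k\}$ is an antichain of size four. This contradicts $w(P)\le3$. Thus at most one new maximal point $r_i$ exists, and consequently $|\Max(C_i)|\in\{2,3\}$.

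\emph{No surviving maximum is lost.} There is one edge case to record: $\Max(C_i)$ always contains both $m_j$ and $m_k$, since these are distinct points of $C_i$. So the count can never drop below two.

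The only step needing care is the width argument. One must check that the two candidates $r,r'$ are incomparable, not merely distinct, and this follows from the uniqueness of their strict upper bound as shown above. No earlier lemma beyond the definitions is needed.
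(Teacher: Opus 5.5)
Your proof is correct and follows essentially the same route as the paper: the two surviving maxima stay maximal, any extra maximal point of $C_i$ lies in $B$ with $m_i$ as its only strict upper bound, and the width bound on the antichain $\Max(C_i)$ limits the count to three. The paper simply notes that maximal elements of $C_i$ are automatically pairwise incomparable, so your separate incomparability check for $r,r'$ is redundant but harmless.
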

\begin{proof}
The other two maximal elements of $P$ remain maximal in $C_i$. The maximal elements of $C_i$ form an antichain and $w(C_i)\le3$. Any additional maximal element $r_i$ was not maximal in $P$, but all its strict upper bounds were deleted; the only deleted point is $m_i$.
\end{proof}

\begin{definizione}[Candidate coloring]\label{def:candidate}
A candidate coloring of the card $C_i$ is the choice of a set
$O_i\subseteq\Max(C_i)$ of cardinality two, called the set of \emph{designated old maximal elements}, such that
\[
 B_i=C_i\setminus O_i\cong B.
\]
The colored card is $(C_i,B_i)$. The authentic coloring is given by
$O_i=\Max(P)\setminus\{m_i\}$ and $B_i=B$. If $C_i$ has three maximal elements, a candidate choice is equivalent to designating one of them as the new maximal element; if it has two, the choice is forced.
\end{definizione}
We do not assume that a candidate coloring is already a reconstructible marking in the sense of Definition~3.2 of \cite{S03}. It is merely a finite pair subjected to the tests below.

\begin{lemma}[The base type forces the ideal type]\label{lem:basefilter}
Let $C$ be an ordered set with exactly three maximal elements $x,y,z$. If
\[
 C\setminus\{y,z\}\cong C\setminus\{x,z\},
\]
then $\downarrow_Cx\cong\downarrow_Cy$.
\end{lemma}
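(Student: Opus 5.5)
The plan is to compare the two isomorphic suborders through the multiset of isomorphism types of all their principal ideals. This invariant is preserved by any isomorphism, and the contribution of the common part can be cancelled.

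Put $A=C\setminus\{x,y,z\}$. Then
\[
 C\setminus\{y,z\}=A+x,\qquad C\setminus\{x,z\}=A+y .
\]

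First, the ideals in these suborders agree with the ideals in $C$ and in $A$.

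\emph{The added points.} The set $\{y,z\}$ consists of maximal elements of $C$, so it is an up-set of $C$. Since $x\notin\{y,z\}$, Lemma~\ref{lem:upset} gives
\[
 \downarrow_{A+x}x=\downarrow_Cx .
\]
Symmetrically, using the up-set $\{x,z\}$, we get $\downarrow_{A+y}y=\downarrow_Cy$.

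\emph{The points of $A$.} In $A+x$ the point $x$ is maximal, because it is maximal in $C$. Hence $\{x\}$ is an up-set of $A+x$. For every $a\in A$, Lemma~\ref{lem:upset} then gives $\downarrow_{A+x}a=\downarrow_Aa$. Likewise $\downarrow_{A+y}a=\downarrow_Aa$.

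Next, for a finite ordered set $X$, consider the multiset
\[
 I(X)=\mset{[\downarrow_Xu]:u\in X}
\]
of isomorphism types of principal ideals, written as a formal sum. An isomorphism $\varphi\colon X\to X'$ maps $\downarrow_Xu$ onto $\downarrow_{X'}\varphi(u)$ and restricts to an isomorphism between them. Since $\varphi$ is a bijection, it follows that $I(X)=I(X')$.

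By the previous step,
\[
 I(A+x)=[\downarrow_Cx]+\sum_{a\in A}[\downarrow_Aa],
 \qquad
 I(A+y)=[\downarrow_Cy]+\sum_{a\in A}[\downarrow_Aa].
\]
The hypothesis $A+x\cong A+y$ makes these equal. Cancelling the common sum, which is legitimate in the free abelian group as in the conventions, leaves $[\downarrow_Cx]=[\downarrow_Cy]$. This is the claim.

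The only point needing care is that the isomorphism need not send $x$ to $y$; it may send $x$ to some element of $A$. One cannot argue pointwise, and a comparison restricted to maximal elements does not obviously cancel. Counting over all elements sidesteps this: the identification $\downarrow_{A+x}a=\downarrow_Aa$ is exactly what makes the contributions of $A$ identical on both sides. The third maximal element $z$ enters only through the fact that $\{y,z\}$ and $\{x,z\}$ are up-sets.
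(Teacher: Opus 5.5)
Your proof is correct and is essentially the paper's argument. The paper also forms $I(H)=\sum_{u\in H}[\downarrow_H u]$ with $K=C\setminus\{x,y,z\}$, observes that the ideals of points of $K$ are unchanged in $K+x$ and $K+y$, and cancels $I(K)$ in the free abelian group; your explicit use of Lemma~\ref{lem:upset}, both for the added point and for the points of $A$, spells out what the paper states in one line.
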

\begin{proof}
Put $K=C\setminus\{x,y,z\}$. For an ordered set $H$ define the formal sum
$I(H)=\sum_{u\in H}[\downarrow_Hu]$. The ideals of the points of $K$ do not change in the two extensions $K+x$ and $K+y$. Hence
\[
 I(K+x)=I(K)+[\downarrow_Cx],\qquad
 I(K+y)=I(K)+[\downarrow_Cy].
\]
The isomorphism of the extensions makes the left-hand sides equal. Canceling $I(K)$ in the free abelian group on isomorphism types gives
the conclusion. The isomorphism of the extensions is not required to preserve $K$.
\end{proof}

Put $d_i=\ell_P(m_i)$. If a nonauthentic coloring of $C_i$ designates the old maximal element $m_j$ as new, exchanging its role with $r_i$, then Lemma~\ref{lem:basefilter} gives
\begin{equation}\label{eq:sizeineq}
 \ell_P(r_i)=d_j<d_i.
\end{equation}
We have used Lemma~\ref{lem:upset} to pass from $C_i$ to $P$, and $r_i<m_i$ for the inequality. In particular, if $d_i$ is minimal, the coloring of $C_i$ is forced.

\begin{corollario}[Bound on the choices]\label{cor:four}
After fixing the type of $B$ and the unmarked maximal deck of $P$, there are at most four candidate assignments of colorings to the three card occurrences, before identifying assignments with the same multiset of colored types.
\end{corollario}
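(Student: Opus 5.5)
The plan is to bound, card by card, the number of admissible designations of the new maximal element, and then multiply the three bounds. A card with two maximal elements contributes exactly one coloring by Definition~\ref{def:candidate}, so only cards with three maximal elements need attention. Index so that $d_1\le d_2\le d_3$, where $d_i=\ell_P(m_i)$. A nonauthentic coloring of $C_i$ designates some surviving old maximal element $m_j$ ($j\ne i$) as new. Applying Lemma~\ref{lem:basefilter} to $C_i$ with $x=r_i$ and $y=m_j$ gives $\downarrow m_j\cong\downarrow r_i$. By \eqref{eq:sizeineq} this forces
\[
 d_j=\ell_P(r_i)<d_i .
\]
Thus the admissible $j$ satisfy $d_j<d_i$ and additionally all share the single value $\ell_P(r_i)$. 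This second constraint is the key refinement. The crude bound of ``one authentic choice plus all $j$ with $d_j<d_i$'' gives $1\cdot2\cdot3=6$, which is too many.

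I will then run a case analysis on the pattern of equalities among $d_1,d_2,d_3$.

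\emph{Case $d_1=d_2=d_3$.} No $j$ satisfies $d_j<d_i$, so every card is forced and there is exactly $1$ assignment.

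\emph{Case $d_1<d_2<d_3$.} Card $C_1$ is forced. For $C_2$, only $j=1$ is possible, giving at most $2$ choices. For $C_3$, the admissible $j\in\{1,2\}$ must satisfy $d_j=\ell_P(r_3)$. Since $d_1\ne d_2$, at most one of them qualifies, giving at most $2$ choices. The total is at most $4$.

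\emph{Case $d_1=d_2<d_3$.} Both $C_1$ and $C_2$ are forced, because no index has strictly smaller $d$. For $C_3$ both $j=1$ and $j=2$ may qualify, giving at most $3$ choices, so there are at most $3$ assignments.

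\emph{Case $d_1<d_2=d_3$.} Card $C_1$ is forced. For $C_2$ and for $C_3$, the only possible $j$ is $1$, since $d_2\not<d_3$ and $d_3\not<d_2$. Each therefore contributes at most $2$ choices, for a total of at most $4$.

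In every case the number of assignments is at most four.

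It remains to check that this count refers only to the fixed data. A candidate coloring of a card occurrence is defined intrinsically: it is a $2$-subset $O_i\subseteq\Max(C_i)$ with $C_i\setminus O_i\cong B$. So the set of candidates on each occurrence depends only on the card and $[B]$. The bounds above are upper bounds on the size of this set for each of the three occurrences, whatever the card types in the unmarked deck are. Their product bounds the number of assignments before any identification of assignments with equal multisets of colored types.

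The main point to handle carefully is the use of Lemma~\ref{lem:basefilter} together with Lemma~\ref{lem:upset}. The ideal of $r_i$ and of $m_j$ computed in $C_i$ must equal the ideals computed in $P$. This holds because $\{m_i\}$ is an up-set of $P$, so deleting it does not change the ideals of the remaining points. Only then do all nonauthentic candidates on one card share the single ideal size $\ell_P(r_i)$, which is what rules out the bound $6$.
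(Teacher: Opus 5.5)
Your proposal is correct and is essentially the paper's proof. You use \eqref{eq:sizeineq} (via Lemma~\ref{lem:basefilter} and Lemma~\ref{lem:upset}) to restrict nonauthentic choices on $C_i$ to old maximal elements $m_j$ with $d_j=\ell_P(r_i)<d_i$, observe that two distinct sizes cannot both equal $\ell_P(r_i)$, and obtain the same per-card bounds $1,1,1$; $1,1,3$; $1,2,2$; $1,2,2$ in the four size patterns, your remark that the candidate sets depend only on the card types and $[B]$ being a harmless clarification of the paper's closing sentence.
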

\begin{proof}
Order $d_1\le d_2\le d_3$. The authentic coloring always gives one choice; by \eqref{eq:sizeineq}, an additional choice is possible only for an old maximal element whose ideal is smaller than the ideal of the deleted maximal element. If the sizes are distinct, the new maximal element cannot simultaneously have the size of both old maximal elements. Thus the bounds are
\[
\begin{array}{c|c|c}
\text{sizes}&\text{choices per card}&\text{product}\\\hline
 d_1=d_2=d_3&1,1,1&1\\
 d_1=d_2<d_3&1,1,3&3\\
 d_1<d_2=d_3&1,2,2&4\\
 d_1<d_2<d_3&1,2,2&4
\end{array}
\]
Identifying isomorphic colorings or occurrences can only decrease the number.
\end{proof}

\subsection{Atoms and margins}
Let $\Idl(B)$ denote the set of all down-sets of $B$, including the empty set, and put $\Orb(B)=\Idl(B)/G$, where $G=\Aut(B)$.
Denote by $[D]_G$ the atom corresponding to the orbit of $D$, and by $\N^{(\Orb(B))}$ the free commutative monoid on these atoms. Each atom has a well-defined attachment size, equal to $|D|$.

For a candidate coloring $(C,B')$, choose an isomorphism $\lambda:B'\to B$. If $x,y$ are the two designated old maximal elements, put
\begin{equation}\label{eq:margin}
 v(C,B')=
 \bigl[\lambda(\{b\in B':b<x\})\bigr]_G+
 \bigl[\lambda(\{b\in B':b<y\})\bigr]_G.
\end{equation}
This is the \emph{orbital margin} of the card. Two choices of $\lambda$ differ by an automorphism of $B$, so the margin does not depend on the choice. The type of the colored card is the orbit of the \emph{unordered pair with multiplicity} of the two attachments. The margin retains only the two individual orbits.
Indeed, an isomorphism of colored cards restricts to an isomorphism of
the distinguished bases and therefore, after transport to $B$, sends
the unordered pair of attachment ideals, with multiplicity, to the
corresponding pair. Conversely, an automorphism of $B$ carrying one
such unordered pair to the other extends to an isomorphism of the
colored cards by matching the two designated maximal elements.

The strict ideals of $x,y$ are contained in $B'$, because $x,y$ are distinct maximal elements of $C$. Therefore the sizes of the two atoms in the margin are $\ell_C(x)-1$ and $\ell_C(y)-1$, independently of the chosen isomorphism.

\begin{lemma}[Pointed interpretation of atoms]\label{lem:pointed}
The atom $[D]_G$ is equivalent to the pointed type obtained by adjoining to $B$ a new maximal element $d$ above exactly $D$ and distinguishing $d$. In particular, if $x,y$ are the designated old maximal elements of $(C,B')$, then the margin is represented by the multiset
\[
 \mset{\pt{C-y}{x},\pt{C-x}{y}}.
\]
\end{lemma}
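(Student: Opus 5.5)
The plan is to set up an explicit bijection between atoms of $\N^{(\Orb(B))}$ and pointed types of one-point extensions of $B$ by a new maximal element. Then I specialize it to the two designated old maximal elements of a colored card.

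\emph{Forward map.} Given a down-set $D\subseteq B$, form $B+_D d$: the ordered set $B\cup\{d\}$, where $b<d$ exactly when $b\in D$. This is an ordered set because $D$ is a down-set, and $d$ is maximal. Send $[D]_G$ to $\pt{B+_D d}{d}$. It is well defined: if $D'=g(D)$ with $g\in G$, then extending $g$ by $d\mapsto d'$ gives an isomorphism $B+_D d\to B+_{D'}d'$ fixing the distinguished point. The order is preserved and reflected, since $b<d$ iff $g(b)\in g(D)=D'$ iff $g(b)<d'$.

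\emph{Injectivity.} Suppose $\varphi:(B+_Dd,d)\to(B+_{D'}d',d')$ is an isomorphism of pointed ordered sets. Since $\varphi(d)=d'$, it restricts to a bijection $B\to B$. Because suborders are induced, this restriction is an automorphism $g\in G$. Moreover $b<d$ iff $\varphi(b)<d'$, so $g(D)=D'$ and $[D]_G=[D']_G$.

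\emph{Surjectivity onto the relevant class.} Take any pointed ordered set $(X,x)$ with $x$ maximal and $X-x\cong B$. Pick an isomorphism $\mu:X-x\to B$ and set $D=\mu(\{u<x\})$. This $D$ is a down-set, and $\mu$ extended by $x\mapsto d$ gives $(X,x)\cong(B+_Dd,d)$. So the atoms correspond exactly to these pointed types, which is the meaning of ``equivalent'' in the statement.

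\emph{Application to the margin.} Let $x,y$ be the designated old maximal elements of $(C,B')$ and $\lambda:B'\to B$ an isomorphism. We have $C-y=B'+x$. Because $y\ne x$ and $y$ is maximal in $C$, deleting $y$ does not change the strict ideal of $x$ (Lemma~\ref{lem:upset} with $V=\{y\}$). Hence $\{b\in B':b<x\}$ is the same set computed in $C$ or in $C-y$, and it lies in $B'$. Extending $\lambda$ by $x\mapsto d$ gives an isomorphism of pointed sets $(C-y,x)\to(B+_Dd,d)$ with $D=\lambda(\{b\in B':b<x\})$. The same argument with the roles swapped handles $(C-x,y)$. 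Summing gives the multiset
\[
\mset{\pt{C-y}{x},\pt{C-x}{y}},
\]
which under the bijection is exactly $v(C,B')$ from \eqref{eq:margin}.

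There is no real obstacle. The only point needing care is the injectivity step: an isomorphism of pointed types must map $B$ onto $B$. This holds because it fixes the distinguished point, and $B$ is precisely the complement of that point. That is why pointed types, rather than plain types, are used.
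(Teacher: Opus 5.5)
Your proposal is correct and follows the same route as the paper: a pointed isomorphism, once the distinguished point is removed, restricts to an automorphism of $B$ carrying one attachment to the other, and conversely an automorphism of $B$ extends by sending the added point to the added point. You additionally make explicit two steps the paper leaves implicit: surjectivity onto the pointed one-point extensions of $B$, and the verification of the margin formula via Lemma~\ref{lem:upset}.
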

\begin{proof}
After deleting the distinguished point, a pointed isomorphism restricts to an isomorphism of the bases that maps one attachment to the other. Conversely, such an isomorphism of the bases extends by sending the added point to the added point. This gives the bijective correspondence with the orbits under $G$.
\end{proof}

In what follows we identify atoms with their pointed representations from Lemma~\ref{lem:pointed}. For the authentic cards, writing
\[
 \alpha=[D_1]_G,\qquad \beta=[D_2]_G,\qquad \gamma=[D_3]_G,
\]
the margins are
\begin{equation}\label{eq:authmargins}
 v_1=\beta+\gamma,\qquad v_2=\alpha+\gamma,\qquad
 v_3=\alpha+\beta.
\end{equation}

\begin{definizione}[Orbital balance]\label{def:BO}
Three candidate margins $v'_1,v'_2,v'_3$ satisfy \emph{orbital balance}, abbreviated $\BO$, if
\begin{equation}\label{eq:BO}
 \begin{gathered}
 R=v'_1+v'_2+v'_3\text{ has all coefficients even},\\
 h=\tfrac12R,\qquad v'_i\le h\quad(i=1,2,3).
 \end{gathered}
\end{equation}
\end{definizione}

\begin{lemma}[Necessity of the balance condition]\label{lem:necessary}
The authentic colorings satisfy $\BO$.
\end{lemma}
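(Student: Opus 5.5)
The plan is to compute directly from the authentic margins \eqref{eq:authmargins}, which are available for the authentic colorings because $B_i=B$ and we may take $\lambda=\mathrm{id}_B$. With this choice the designated old maximal elements of $C_1$ are $m_2,m_3$, and their strict ideals in $B$ are $D_2,D_3$. The margin of $C_1$ is therefore $\beta+\gamma$, and the same reasoning gives the margins of $C_2$ and $C_3$. Summing, we obtain
\[
 R=v_1+v_2+v_3=2(\alpha+\beta+\gamma)
\]
in $\N^{(\Orb(B))}$. It follows at once that every coefficient of $R$ is even. This remains true when some of the atoms coincide, for instance when $\alpha=\beta$: the coefficient of that atom is then $2\cdot 2$, still even.

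Next we identify $h=\tfrac12R=\alpha+\beta+\gamma$. The inequalities $v'_i\le h$ then reduce to checks such as
\[
 h-v_1=\alpha\in\N^{(\Orb(B))},
\]
which has nonnegative coefficients, and similarly $h-v_2=\beta$ and $h-v_3=\gamma$. Since the order on formal sums is coefficientwise, this gives $v_i\le h$ for $i=1,2,3$, which is exactly \eqref{eq:BO}.

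I do not expect a genuine obstacle. The only point needing care is that the margin is well defined for the authentic coloring, namely that it does not depend on the choice of $\lambda$. This was established after \eqref{eq:margin}, so choosing the identity is legitimate. The phrase ``double counting'' in the introduction refers to exactly this observation: each attachment atom $[D_j]_G$ is counted once on each of the two cards $C_i$ with $i\ne j$.
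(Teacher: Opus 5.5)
Your proposal is correct and follows the paper's proof: the authentic margins are $\beta+\gamma$, $\alpha+\gamma$, $\alpha+\beta$, so $R=2(\alpha+\beta+\gamma)$ has even coefficients and each margin is dominated by $h=\alpha+\beta+\gamma$. Your remarks on coinciding atoms and on the legitimacy of choosing $\lambda=\mathrm{id}_B$ only spell out details the paper leaves implicit.
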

\begin{proof}
Each original maximal element survives on exactly two cards, and its attachment ideal does not change. By \eqref{eq:authmargins}, $R=2(\alpha+\beta+\gamma)$, and every $v_i$ is at most $\alpha+\beta+\gamma$.
\end{proof}

\begin{osservazione}[Not an arbitrary gluing criterion]
The condition $\BO$ alone does not guarantee that arbitrary colored data arise from an ordered set. For example, let the base be a three-point antichain, and let $J,K$ be distinct two-point down-sets. The attachment pairs
\[
 \mset{J,J},\qquad \mset{J,J},\qquad \mset{J,K}
\]
have margins $2\omega,2\omega,2\omega$, where $\omega$ is the unique orbit of two-point subsets. They satisfy $\BO$, but they are not the three pairs coming from a triple: two equal diagonal attachment pairs would force the third pair to be equal as well. The sufficiency result proved below requires the unmarked cards of one actual ordered set $P$ and candidate bases isomorphic to $B_P$.
\end{osservazione}

\section{Automorphism actions in width at most three}
\label{sec:groups}

The sufficiency of orbital balance requires turning several pairwise
compatibility conditions on attachment ideals into a single
automorphism of the base. In width at most three this is possible
because the automorphism action has a particularly restricted form.
This section isolates the group-theoretic mechanism behind that
simultaneous compatibility.

\begin{lemma}[Small diagonal factors]\label{lem:diagonal}
Let $X$ be a finite ordered set of width at most three. Up to renumbering within the orbits, the action of $\Aut(X)$ is a direct product, on disjoint supports, of diagonal actions of
\[
 S_2,\qquad S_3,\qquad C_3,
\]
together with fixed points. A diagonal factor acts by the same permutation on several copies of a two- or three-point set, with respect to fixed identifications.
\end{lemma}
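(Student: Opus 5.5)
The plan is to use the invariant $\ell_X$ from Lemma~\ref{lem:upset} to bound the size of every $\Aut(X)$-orbit, and then to analyse how an automorphism can couple two different orbits. Put $G=\Aut(X)$. Every automorphism preserves principal ideals, so it preserves $\ell_X$. Hence every $G$-orbit lies in a single class $\{x:\ell_X(x)=s\}$. By Lemma~\ref{lem:upset} that class is an antichain, so by $w(X)\le3$ every orbit has at most three points. A transitive permutation group on two or three points is $S_2$, $C_3$ or $S_3$. Thus the orbits are fixed points or nontrivial orbits of size $2$ or $3$, carrying one of these three actions. This is the only place where the width bound enters.

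Next I would analyse the coupling between two orbits $O\neq O'$. The relation $\rho=\{(u,u')\in O\times O':u<u' \text{ or } u'<u\}$ is $G$-invariant, and so are its two halves separately. By transitivity of $G$ on $O$, every row $\{u'\in O':(u,u')\in\rho\}$ has the same cardinality $k$.
\begin{itemize}
\item If $k=0$ or $k=|O'|$, call the pair \emph{unlinked}.
\item Otherwise, since $|O'|\le3$, either $k=1$ or $k=2=|O'|-1$ with $|O'|=3$. In the second case we pass to the complementary relation. Either way we obtain a $G$-equivariant map $f:O\to O'$.
\item Its fibres form a $G$-invariant block system on $O$. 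Symmetrically there is an equivariant map $O'\to O$.
\end{itemize}
Sizes $2$ and $3$ are therefore incompatible: a $3$-point set has no partition into two equal blocks. So $|O|=|O'|$, and $f$ is an equivariant bijection, i.e. an identification under which $G$ acts by the same permutation on $O$ and on $O'$. A fixed point together with any orbit is automatically unlinked, since its row is $G$-invariant.

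The third step builds the factors. Form the graph whose vertices are the nontrivial orbits and whose edges join linked pairs, and take its connected components $K$. Composing the equivariant bijections along a spanning tree identifies every orbit in $K$ with a fixed model orbit $O_K$. Any $g\in G$ then acts on $\bigcup K$ by one permutation of $O_K$, copied diagonally, and the copy is the same whichever edges are used, because the composed maps are equivariant. The image of $G$ on $\bigcup K$ is thus the diagonal copy of a transitive group on $O_K$, that is, of $S_2$, $C_3$ or $S_3$.

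Finally, I would prove that these restrictions are independent, which gives the direct product. For $g\in G$ and a component $K$, let $g_K$ agree with $g$ on $\bigcup K$ and be the identity elsewhere. The map $g_K$ is a bijection, and I must check that it preserves and reflects order.
\begin{itemize}
\item Pairs inside $\bigcup K$ are handled because $g$ is an automorphism.
\item Pairs outside $\bigcup K$ are fixed.
\item A pair with one point $u\in O\subseteq\bigcup K$ and the other $v$ in an orbit outside $K$, or a fixed point, lies in an unlinked pair of orbits. Hence whether $u<v$ or $v<u$ holds does not depend on $u$ within $O$. More precisely, each half of $\rho$ is $G$-invariant and has constant row size, equal to $0$ or all.
\end{itemize}
So $g_K\in G$, and $g=\prod_K g_K$ with the factors on disjoint supports. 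This yields $G\cong\prod_K(\text{diagonal factor})$, as claimed.

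I expect the main obstacle to be the coupling analysis in the second step. One must rule out subdirect products that are not diagonal, for instance $S_3$ acting on a second orbit through the sign map. Such a coupling would have to be witnessed by some invariant relation between orbits. The plan handles this only by the row-count and block-system argument, so care is needed to treat $<$ and $>$ separately and to make sure that the complement trick still produces an equivariant map.
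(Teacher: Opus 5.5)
Your proof is correct. It reaches the same statement as the paper, but obtains the structure inside each component differently.

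The paper's route has three steps:
\begin{enumerate}
\item It shows orbits are antichains by iterating $g$ on a relation $x<g(x)$ to get a cyclic chain.
\item It excludes coupling of a $2$-orbit with a $3$-orbit by counting edges of a biregular bipartite relation: $2d=3e$ leaves only $0$ or $6$ edges.
\item In each component of the linkage graph, it identifies the admissible permutations as the centralizer $C_{S_k}(H)$ of the subgroup $H$ generated by the permutations read around cycles. An exactness argument (every system of orbit permutations satisfying the matching constraints is an automorphism) and transitivity of the actual orbits then select $S_2$, $S_3$, $C_3$ among these centralizers.
\end{enumerate}

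You replace the antichain argument by the $\ell_X$-classes of Lemma~\ref{lem:upset}, and the edge count by equivariant surjections and block systems. More substantially, you never compute the group of a component. The image of $\Aut(X)$ on the model orbit $O_K$ is a transitive group of degree at most three, and the equivariant identifications make the action diagonal. You then check the product structure directly through the truncations $g_K$, which is exactly the part of the paper's exactness argument that is needed.

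Your route is shorter and proves precisely the statement. The paper's route yields more:
\begin{itemize}
\item which factor occurs, in terms of the cycle data ($C_3$ exactly when $H=C_3$, and $S_3$ when $H$ is trivial);
\item a description of $\Aut(X)$ as all constraint-satisfying orbit permutations.
\end{itemize}

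Two points should be made explicit.
\begin{itemize}
\item In the last step, $G$-invariance alone does not show that each half of $\rho$ has row size $0$ or full when $\rho$ is complete. A priori a complete $\rho$ could split into two intermediate halves. What you need is that all comparabilities between two orbits have one orientation. This follows at once from your Step~1, since $\ell_X$ is constant on orbits and strictly increasing along $<$; the paper states the same fact via rank.
\item The block-system argument uses that $f$ is surjective. This holds because its image is a nonempty $G$-invariant subset of the transitive orbit $O'$.
\end{itemize}
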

\begin{proof}
Every orbit is an antichain. Indeed, if two points in the same orbit satisfied $x<g(x)$, iteration of the finite permutation $g$ would produce a strict cyclic chain. Hence orbits have cardinality one, two, or three.

Consider two distinct orbits $O,O'$. Rank is constant on each orbit, so if there are comparabilities between them, they have only one orientation. The relation between the two sets is biregular: transitivity of the action on each orbit makes the number of neighbors constant on each side. If the cardinalities are two and three, the number of edges satisfies
\[
 2d=3e,\qquad 0\le d\le3,\quad 0\le e\le2.
\]
Only zero or six edges are possible. The relation is empty or complete. The same is true if one orbit is a singleton; such relations do not couple the permutations of the orbits.

If $|O|=|O'|=k\in\{2,3\}$, every nonempty, noncomplete relation is a perfect matching or, when $k=3$, the complement of a perfect matching. It determines a bijection $f:O\to O'$. Preserving the relation is equivalent to imposing
\begin{equation}\label{eq:matching}
 \sigma_{O'}=f\sigma_Of^{-1}.
\end{equation}

Form the graph of the nonsingleton orbits, joining two orbits when a nonuniform relation imposes such a bijection. Every connected component consists of orbits of the same cardinality $k$. Choose a root orbit and a spanning tree. By transporting coordinates along the tree, all permutations in the component are determined by one $\sigma\in S_k$. Every remaining edge requires $\sigma$ to commute with the coordinate permutation obtained around the corresponding cycle. Thus the allowed choices are exactly
\[
 C_{S_k}(H),
\]
where $H$ is the subgroup generated by these cycle permutations.

To justify exactness, necessity of \eqref{eq:matching} is not enough. Conversely, every system of orbit permutations satisfying all these constraints preserves every order relation between every pair of orbits. Within an orbit there is only equality; between distinct components the relations are empty or complete. Such a system is therefore an automorphism of the whole of $X$. It also follows that the choices on distinct components are independent.

The initial orbits are the actual orbits of $\Aut(X)$; hence the centralizer on every root must be transitive. For $k=2$ it is $S_2$. For $k=3$, the centralizers of the subgroups $1,C_2,C_3,S_3$ are respectively $S_3,C_2,C_3,1$; only $S_3$ and $C_3$ are transitive on the three points. This proves the asserted factors and their diagonal action.
\end{proof}

\begin{lemma}[Simultaneous compatibility of subsets]\label{lem:pairwise}
Let $X$ have width at most three, and let $A_1,\ldots,A_t,A'_1,\ldots,A'_t$ be subsets of $X$. Suppose that for every $i,j\in\{1,\ldots,t\}$, including $i=j$, there exists $g_{ij}\in\Aut(X)$ such that
\[
 g_{ij}(A_i)=A'_i,\qquad g_{ij}(A_j)=A'_j.
\]
Then there exists $g\in\Aut(X)$ such that $g(A_i)=A'_i$ for every $i$.
\end{lemma}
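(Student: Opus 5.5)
The plan is to reduce, via Lemma~\ref{lem:diagonal}, to a Helly-type statement for cosets in the three small groups $S_2$, $S_3$, $C_3$.

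\textbf{Step 1: splitting by factors.} Write $X=F\cup S_1\cup\dots\cup S_r$. Here $F$ is the set of fixed points of $\Aut(X)$, and $S_c$ is the support of the $c$-th diagonal factor $\Gamma_c\in\{S_2,S_3,C_3\}$. By Lemma~\ref{lem:diagonal}, every $g\in\Aut(X)$ corresponds to a tuple $(\sigma_1,\dots,\sigma_r)$ with $\sigma_c\in\Gamma_c$ acting independently on $S_c$, and every such tuple is an automorphism. Since the supports are disjoint and preserved, $g(A)=A'$ holds exactly when $A\cap F=A'\cap F$ and $\sigma_c(A\cap S_c)=A'\cap S_c$ for every $c$. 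For each $i$ and $c$ put
\[
 T_{i,c}=\{\sigma\in\Gamma_c:\sigma(A_i\cap S_c)=A'_i\cap S_c\}.
\]
The hypothesis with $i=j$ shows $T_{i,c}\neq\varnothing$, and also gives $A_i\cap F=A'_i\cap F$. Hence $T_{i,c}$ is a left coset $\sigma_0\operatorname{Stab}_{\Gamma_c}(A_i\cap S_c)$. The hypothesis for a pair $i,j$ gives $T_{i,c}\cap T_{j,c}\neq\varnothing$ for every $c$. It therefore suffices to show, for each factor separately, that pairwise intersecting cosets of these stabilizers have a common element. Choosing one such element $\sigma_c$ in each factor then gives the required $g$.

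\textbf{Step 2: identifying the possible stabilizers.} In a diagonal factor, $S_c$ is a union of copies of $\{1,2,3\}$ (or $\{1,2\}$), and $\Gamma_c$ acts by the same permutation on all copies. So a subset of $S_c$ is a union of subsets of the individual copies, and its stabilizer is the intersection of the stabilizers of those subsets in $\Gamma_c$.
\begin{itemize}
\item For $\Gamma_c=S_3$, the stabilizer of a subset of $\{1,2,3\}$ is $S_3$ (for $\varnothing$ or the whole set) or a transposition subgroup. Intersections of these are $S_3$, some $\langle t\rangle$, or $1$. In particular $A_3$ never occurs as a stabilizer.
\item For $\Gamma_c=C_3$, the stabilizers are $C_3$ or $1$.
\item For $\Gamma_c=S_2$, the stabilizers are $S_2$ or $1$.
\end{itemize}
I expect this observation to be the crux. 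Helly's property genuinely fails for arbitrary cosets in $S_3$: the pairs $\{p,pt_1\}$ and $\{p,pt_2\}$, with $p$ odd, together with the coset $A_3$, intersect pairwise but have empty common intersection. It is exactly the exclusion of $A_3$ as a stabilizer that rescues the argument.

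\textbf{Step 3: the Helly property for the remaining cosets.}
\begin{itemize}
\item Cosets of the whole group can be discarded.
\item In $S_2$ and $C_3$, the remaining cosets are singletons, and pairwise intersecting singletons coincide.
\item In $S_3$, if some coset is a singleton $\{a\}$, pairwise intersection forces $a$ into every other coset. Otherwise all remaining cosets are two-element sets $\{a,at\}$ with $t$ a transposition. Suppose a family of such pairs intersects pairwise but has no common point. Then it contains three pairs forming a triangle $\{a,c\},\{a,b\},\{b,c\}$ with $a,b,c$ distinct. Then $a^{-1}b$, $b^{-1}c$ and $a^{-1}c=(a^{-1}b)(b^{-1}c)$ are all transpositions. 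This is impossible, since a product of two transpositions is even.
\end{itemize}
So in every factor the cosets $T_{1,c},\dots,T_{t,c}$ have a common element $\sigma_c$, and the tuple $(\sigma_c)_c$, acting trivially on $F$, is the desired $g$.
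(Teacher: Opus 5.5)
Your proof is correct. Its first step is the same as the paper's: you split the problem over the factors of Lemma~\ref{lem:diagonal}, and you dispose of the $S_2$ and $C_3$ factors the same way, since there the relevant sets are singletons or the whole factor.

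The difference is in the only substantive case, a diagonal $S_3$ factor.
\begin{itemize}
\item The paper notes that on each copy a nontrivial part of $A_i$ is a singleton or the complement of one. So each condition is a conjunction of prescriptions $\sigma(k)=k'$ on the index set. Pairwise compatibility makes the union of all prescriptions a partial bijection, which extends to a permutation.
\item You write the solution sets as cosets of the possible stabilizers $S_3$, $\langle t\rangle$, $1$. You then prove a Helly property for them, using the star-or-triangle dichotomy for pairwise intersecting two-element sets and a parity argument.
\end{itemize}

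These are two proofs of the same local Helly statement, because a left coset of a point stabilizer is exactly a set $\{\sigma:\sigma(k)=k'\}$.

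What each version buys:
\begin{itemize}
\item The paper's version is shorter and treats $S_2$ and $S_3$ uniformly.
\item Yours names the potential obstruction: cosets of $A_3$, that is, parity conditions, which no conjunction of index prescriptions can express. Your example shows that Helly genuinely fails for them. This isolates where the restricted form of the automorphism action is used, in the same spirit as Remark~\ref{rem:widthessential}.
\end{itemize}

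One step is asserted without argument: that pairwise intersecting two-element sets with no common point must contain a triangle. It is elementary. Two distinct such sets share a vertex $a$, and any set avoiding $a$ must be the third side.
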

\begin{proof}
Use the decomposition of the action in Lemma~\ref{lem:diagonal}. It is enough to solve all conditions on each factor and then combine the solutions on disjoint supports.

In a diagonal $S_3$ factor, on each three-point copy a nonempty proper part of a subset is a singleton or the complement of a singleton. Requiring this part to be mapped to the prescribed part of $A'_i$ is therefore a prescription for the image of an index in $\{1,2,3\}$. The same permutation of $\{1,2,3\}$ must be used on every copy of the factor; the indices designated by the individual parts need not be the same.

The prescriptions arising from one $A_i$ are consistent by the case $i=j$. If prescriptions arising from different subsets assigned different images to the same source index, or the same image to distinct source indices, the relevant $g_{ij}$ could not exist. Hence the total set of prescriptions is a partial bijection of the index set, extendable to a permutation. Empty or full parts are respected by individual compatibility. The $S_2$ factor is handled in the same way.

In a diagonal $C_3$ factor, if all subsets have only empty or full parts, there is no constraint. Otherwise choose an $A_i$ with a nontrivial part. Its stabilizer in $C_3$ is trivial, so the condition on the image of $A_i$ determines a unique element of the factor. Pairwise compatibility with every other $A_j$ forces the same element to satisfy the condition on $A_j$ as well.

Fixed points introduce no choices. The product of the solutions on the factors is the required automorphism.
\end{proof}

\begin{osservazione}[Tuples of subsets]\label{rem:tuples}
Lemma~\ref{lem:pairwise} also applies when each object is an ordered tuple of subsets and the image of the entire tuple is prescribed. Apply the lemma to all coordinates: pairs of coordinates within one tuple are handled by its individual compatibility, while coordinates from two tuples are handled by compatibility of the pair of tuples.
\end{osservazione}

For tuples of subsets of $X$, write $\sim_X$ for belonging to the same orbit under the coordinatewise action of $\Aut(X)$. The ordering of the coordinates is part of the data.

\begin{lemma}[Completion of a swap]\label{lem:swap}
Let $A,R,S,N\subseteq X$, where $w(X)\le3$. Suppose that automorphisms of $X$ realize the following three conditions, not necessarily by the same automorphism:
\begin{enumerate}[label=\textup{(\roman*)}]
\item the swap $R\leftrightarrow S$;
\item the cycle $R\mapsto A\mapsto S\mapsto R$;
\item $R\mapsto A$ while preserving $N$.
\end{enumerate}
Then an automorphism of $X$ swaps $A,R$ and preserves $S,N$.
\end{lemma}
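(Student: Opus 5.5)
The plan is to feed Lemma~\ref{lem:pairwise}, in the tuple form of Remark~\ref{rem:tuples}, with the source tuple $(A,R,S,N)$ and the target tuple $(R,A,S,N)$. It then suffices to produce, for every pair of coordinates, an automorphism carrying both source coordinates to their targets simultaneously. Write $h$ for an automorphism realizing (i), $g$ for one realizing (ii), and $f$ for one realizing (iii).

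First I dispose of the pairs that follow by composition.
\begin{enumerate}[label=\textup{(\alph*)}]
\item Pair $(A,R)\mapsto(R,A)$. Put $k=ghg^{-1}$. Then
\[
 k(A)=g(h(R))=g(S)=R,\qquad k(R)=g(h(S))=g(R)=A.
\]
This pair also covers the individual conditions on $A$ and on $R$.
\item Pairs involving $N$. The element $f$ handles $(R,N)\mapsto(A,N)$, and $f^{-1}$ handles $(A,N)\mapsto(R,N)$.
\item Pair $(S,N)\mapsto(S,N)$, and the single coordinates $S$ and $N$. The identity handles these.
\end{enumerate}

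The remaining pairs are $(A,S)\mapsto(R,S)$ and $(R,S)\mapsto(A,S)$, and I expect them to be the main obstacle. The given automorphisms do not obviously fix $S$ while moving $A$ to $R$, because $h(A)$ is uncontrolled. I would first search among short words in $g,h,f$ for one that fixes $S$ and sends $R$ to $A$. If no such word appears, I would instead argue factorwise through Lemma~\ref{lem:diagonal}, exactly as in the proof of Lemma~\ref{lem:pairwise}.

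On a diagonal $S_3$ or $S_2$ factor, the nonempty proper parts of $A,R,S$ on each copy are points or complements of points, indexed in $\{1,2,3\}$. Condition~(ii) shows that these parts are all of the same kind, so each set determines an index, say $a,r,s$. Condition~(i) gives $r=s$ exactly when $R$ and $S$ agree on the factor. In either case the transposition of $a$ and $r$ (the identity if $a=r$) fixes $s$ whenever $s\notin\{a,r\}$. If instead $s=a\ne r$, then on this factor $S=A$, and conditions (i) and (ii) force $R=S$, hence $a=r$, so the case cannot occur with $a\ne r$. The required pair conditions therefore hold on every such factor.

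On a diagonal $C_3$ factor, suppose some set among $A,R,S$ has a nontrivial part. Condition~(i) then forces $h$ to act as an involution on that part, and the only involution in $C_3$ is the identity, so $R=S$ on the factor. Condition~(ii) then gives $A=R=S$ there, and the identity element works. Combining the factors, which act on disjoint supports, yields the pairs $(A,S)$ and $(R,S)$.

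Lemma~\ref{lem:pairwise} then yields one automorphism realizing all four coordinates at once. This automorphism swaps $A$ and $R$ and preserves $S$ and $N$.
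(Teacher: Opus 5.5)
\textbf{There is a gap in the step you flag as the main obstacle.} Your framework matches the paper's: apply Lemma~\ref{lem:pairwise} to the prescription $(A,R,S,N)\mapsto(R,A,S,N)$. Your items (a)--(c) are correct.

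Your factorwise argument assumes that on a diagonal $S_3$ factor each of $A,R,S$ ``determines an index''. This is false in general. The parts of a single subset on different copies may designate different indices, as the proof of Lemma~\ref{lem:pairwise} itself points out. For example, a subset may meet one copy in $\{1\}$ and another in $\{2\}$; that pattern has trivial stabilizer in $S_3$.

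Your analysis via indices $a,r,s$ and the transposition of $a$ and $r$ does not apply to such patterns. So it does not establish the pairs $(A,S)\mapsto(R,S)$ and $(R,S)\mapsto(A,S)$. The case can be repaired. Suppose the pattern of $R$ has trivial stabilizer, and let $\sigma,\pi$ be the components of $g,h$ on that factor. Then $\sigma^3=1$ and $\pi^2=1$, and regularity gives $\pi=\sigma^{-1}$. Hence $\sigma=1$, and $A=R=S$ on that factor. As written, though, the argument is incomplete.

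\textbf{No factorwise analysis is needed.} The short word you were looking for is $hg$. Since $g(A)=S$, $g(S)=R$, $h(S)=R$ and $h(R)=S$, you get $hg(A)=R$ and $hg(S)=S$. Thus $hg$ realizes $(A,S)\mapsto(R,S)$, and $(hg)^{-1}$ realizes $(R,S)\mapsto(A,S)$.

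This is the explicit form of the paper's argument. The cycle places $(R,A),(A,S),(S,R)$ in one orbit and $(A,R),(S,A),(R,S)$ in another. The swap $(R,S)\mapsto(S,R)$ merges the two orbits, so every pair prescription among $A,R,S$ for the desired swap is realizable. Your $ghg^{-1}$ is another instance of this. With $hg$ in place, your items (a)--(c) and Lemma~\ref{lem:pairwise} finish the proof.
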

\begin{proof}
If two of $A,R,S$ coincide, the cycle makes all three equal and the identity suffices. Assume they are distinct. The cycle makes the ordered pairs along each of the two orientations of the triangle with vertices $A,R,S$ equivalent. The swap $R,S$ identifies the two orientations; hence all six ordered pairs with distinct indices belong to the same orbit. The three single orbits also coincide.

Consequently, the desired swap $A\leftrightarrow R$, with $S$ fixed, respects all single and pair orbits of $A,R,S$. For pairs involving $N$, the third hypothesis gives $(R,N)\sim_X(A,N)$ and, using the inverse, the opposite correspondence. The pair $(S,N)$ is unchanged. Lemma~\ref{lem:pairwise} realizes the four prescriptions simultaneously.
\end{proof}

\begin{osservazione}[The width bound is essential]
\label{rem:widthessential}
On a four-point antichain, the ordered families
\[
 (\{1,2\},\{1,3\},\{1,4\})
 \quad\text{and}\quad
 (\{1,2\},\{1,3\},\{2,3\})
\]
are compatible on every pair under $S_4$: any two coordinates have cardinality two and intersection of cardinality one. They are not simultaneously equivalent, because the triple intersection has cardinality one in the first family and zero in the second. Thus Lemma~\ref{lem:pairwise} is not being used as a general fact about permutation groups.
\end{osservazione}

\section{Local separation of colorings}
\label{sec:local}

An orbital margin records the individual orbits of the two attachment
ideals but, in general, forgets their relative position.
For two candidate colorings of the same card, however, the width bound
recovers precisely the information that is lost. The next
lemma is the local separation statement that will allow us to pass from
equality of margins to equality of colored-card types.

\begin{lemma}[The margins distinguish colored types on the same card]\label{lem:local}
Let $C$ be an ordered set of width at most three with exactly three maximal elements $a,b,c$.
Consider the two colorings that designate $a$ and $b$, respectively, as the new maximal element.
If their bases are isomorphic and, after identifying both bases with a
common copy $B$, their orbital margins coincide in $\N^{(\Orb(B))}$, then the two colored cards
are isomorphic. Equivalently, there is an automorphism of $C$ sending $a$ to $b$.
\end{lemma}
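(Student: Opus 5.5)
The plan is to reduce the statement to finding one isomorphism between the two bases that carries the pair of attachment ideals of the first coloring onto the pair of the second, and then to extend it to an automorphism of $C$.

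\emph{Setup.} Put $K=C\setminus\{a,b,c\}$. The first coloring has base $B_a=K+a$ with old maximal elements $b,c$. The second has base $B_b=K+b$ with old maximal elements $a,c$. Write
\[
 D_b=\{x\in B_a:x<b\},\quad D_c=\{x\in B_a:x<c\},\quad
 E_a=\{x\in B_b:x<a\},\quad E_c=\{x\in B_b:x<c\}.
\]
Suppose $\theta:B_a\to B_b$ is an isomorphism with $\theta(D_b)=E_a$ and $\theta(D_c)=E_c$. Extend it by $b\mapsto a$ and $c\mapsto c$. This is a bijection $C\to C$. It preserves and reflects order: inside the bases this is the isomorphism $\theta$, and the relations between a base point and an added maximal element are exactly membership in the matched ideals. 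So it is an automorphism of $C$ sending $b$ to $a$, and its inverse sends $a$ to $b$. Any automorphism sending $a$ to $b$ permutes $\Max(C)$, so it maps $\{b,c\}$ onto $\{a,c\}$ and hence $B_a$ onto $B_b$. This gives the stated equivalence with isomorphism of the colored cards.

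In the crossed case $\theta(D_b)=E_c$, $\theta(D_c)=E_a$, the same extension $b\mapsto c$, $c\mapsto a$ is an automorphism of $C$ that moves $b$ and $c$. Composing it with a suitable power of itself, or with its inverse, should again produce an automorphism carrying $a$ to $b$. I would check this by following $\theta(a)\in B_b=K+b$.

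\emph{Reduction to the base.} Fix an isomorphism $\theta_0:B_a\to B_b$ and work in $X=B_a\cong B$, which has width at most three. Transport $E_a,E_c$ back to $E'_a=\theta_0^{-1}(E_a)$ and $E'_c=\theta_0^{-1}(E_c)$. Equality of the margins gives the two individual orbit equalities in one of the two matchings, say $D_b\sim_X E'_a$ and $D_c\sim_X E'_c$. By Lemma~\ref{lem:pairwise} with $t=2$, it then suffices to prove the single pair compatibility $(D_b,D_c)\sim_X(E'_a,E'_c)$, that is, that the ordered pair of ideals has the same joint orbit.

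\emph{Main obstacle: recovering the joint position.} The margin forgets exactly this relative position, so the extra information has to come from $C$ itself. I would first try Lemma~\ref{lem:diagonal} and analyze each diagonal factor separately. On an $S_3$ or $S_2$ factor, the part of each down-set in a copy is empty, full, a singleton, or the complement of a singleton. The joint orbit is determined by whether the designated indices of $D_b$ and $D_c$ coincide on each factor.

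To control this, I would exploit two facts. First, $D_b\cup D_c\cup{\downarrow}a\supseteq B_a$, since every point of $K$ lies below one of $a,b,c$. Second, the antichains $\{a,b,c\}$ and the level classes of Lemma~\ref{lem:upset} have at most three points. Together these should force, copy by copy, the same coincidence pattern for $(D_b,D_c)$ in $B_a$ and for $(E_a,E_c)$ in $B_b$. Both patterns are read off from the same ordered set $C-c=K+a+b$ together with the point $c$, so they should agree.

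If this direct analysis gets stuck on a $C_3$ factor, I would instead apply the counting trick of Lemma~\ref{lem:basefilter} to pointed types. The aim is to show $\pt{C}{b}$ and $\pt{C}{a}$ have equal invariants, using the margin identity $\mset{\pt{C-c}{b},\pt{C-b}{c}}=\mset{\pt{C-c}{a},\pt{C-a}{c}}$ from Lemma~\ref{lem:pointed}. A $C_3$ factor has trivial stabilizers on nontrivial parts, so there a single orbit equality already determines the automorphism.
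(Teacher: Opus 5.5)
Your setup is correct. An isomorphism of the bases carrying $\mset{D_b,D_c}$ onto $\mset{E_a,E_c}$ extends to an automorphism of $C$. In the crossed case no composition is needed: $\theta(a)\in K+b$ must be maximal in $C$, hence equals $b$.

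The gap is everything after that. Lemma~\ref{lem:pairwise} with $t=2$ reduces nothing. Its only nontrivial hypothesis, for $(i,j)=(1,2)$, is the joint condition $(D_b,D_c)\sim_X(E'_a,E'_c)$, which is literally its conclusion and exactly what you need to prove. So the whole content of the lemma is still open: recovering the relative position of the two attachments from data that record only their individual orbits. You leave this to a factor-by-factor analysis described with ``should'' but never carried out, and none of your suggestions closes it:
\begin{itemize}
\item The two facts you list are not linked to the desired coincidence pattern by any argument.
\item On a $C_3$ factor each individual orbit equality determines a unique element, but nothing forces $D_b$ and $D_c$ to determine the same one; that agreement is precisely the joint condition.
\item Equality of counting invariants of $\pt{C}{a}$ and $\pt{C}{b}$ cannot yield equality of the pointed types.
\end{itemize}

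The missing idea is to descend to a smaller base, where an individual orbit over the enlarged base becomes an orbit of an ordered pair. Write $I_x=\{y\in K:y<x\}$ for $x\in\{a,b,c\}$.

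\emph{Case $\ell_C(c)=\ell_C(a)=\ell_C(b)=s$.} Here $K$ is exactly the set of points with ideal cardinality smaller than $s$ in each card. Hence every pointed isomorphism in
\[
\mset{\pt{C-c}{b},\pt{C-b}{c}}=\mset{\pt{C-c}{a},\pt{C-a}{c}}
\]
restricts to an automorphism of $K$, and the identity becomes an identity of $\Aut(K)$-orbits of ordered pairs:
\[
\mset{[(I_a,I_b)],[(I_a,I_c)]}=\mset{[(I_b,I_a)],[(I_b,I_c)]}.
\]
In either matching this supplies all pairwise conditions for swapping $I_a,I_b$ with $I_c$ fixed, or for the cycle $I_a\mapsto I_b\mapsto I_c\mapsto I_a$ (the third pair is obtained by reversing coordinates). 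Lemma~\ref{lem:pairwise} is then used with $t=3$ on $K$, where it has real content.

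\emph{Case $\ell_C(c)\ne s$.} The matching is forced by the ideal cardinality of the distinguished point, but $K$ need no longer be preserved. It may contain a point $t<c$ with $\ell_C(t)=s$, which an automorphism of $C-c$ can exchange with $a$ or $b$. The paper therefore takes $\phi\in\Aut(C-c)$ with $\phi(b)=a$ and $\psi:C-b\to C-a$ with $\psi(c)=c$, and restricts them to $Y=C\setminus(L\cup\{c\})$, where $L$ is the class of ideal cardinality $s$. It then works with pairs of lower and upper profiles in $Y$ and Remark~\ref{rem:tuples}.

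Your plan contains neither this descent nor the case split that makes it possible.
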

\begin{proof}
Lemma~\ref{lem:basefilter} gives $\ell_C(a)=\ell_C(b)=s$. By Lemma~\ref{lem:pointed}, equality of the margins is equivalent to
\begin{equation}\label{eq:pointedlocal}
\begin{split}
 &\mset{\pt{C-c}{b},\pt{C-b}{c}}\\
 &\hspace{12mm}=\mset{\pt{C-c}{a},\pt{C-a}{c}}.
\end{split}
\end{equation}
There are two cases.

\medskip\noindent\textbf{Case 1: $\ell_C(c)=s$.}
Every nonmaximal point has an ideal of cardinality smaller than $s$, because it lies below a maximal point. Put $Y=C\setminus\{a,b,c\}$, and denote the strict ideals of $a,b,c$ in $Y$ by $A,B,E$, respectively. In each pointed ordered set in \eqref{eq:pointedlocal}, $Y$ is recognizable as the set of points whose ideal has cardinality smaller than $s$. Every pointed isomorphism in that formula therefore restricts to an automorphism of the same $Y$.

Equality \eqref{eq:pointedlocal} becomes an equality between the multisets of orbits of ordered pairs
\begin{equation}\label{eq:pairslocal}
 \mset{[(A,B)],[(A,E)]}
 =\mset{[(B,A)],[(B,E)]},
\end{equation}
where the orbits are under $\Aut(Y)$. The first coordinate corresponds to the nondistinguished maximal element, the second to the distinguished point.

If $A=B$, swapping $a,b$ and fixing $Y\cup\{c\}$ is already an automorphism. Otherwise choose a matching of the two multisets in \eqref{eq:pairslocal}. Under the direct matching one obtains
\[
 (A,B)\sim_Y(B,A),\qquad (A,E)\sim_Y(B,E).
\]
These equivalences and their inverses provide all pairwise conditions for swapping $A,B$ while fixing $E$. Lemma~\ref{lem:pairwise} realizes the swap.

Under the crossed matching one instead has
\[
 (A,B)\sim_Y(B,E),\qquad (A,E)\sim_Y(B,A).
\]
These are the conditions on the pairs $(A,B)$ and $(A,E)$ for the cycle $A\mapsto B\mapsto E\mapsto A$. The third follows by reversing coordinates: from the two equivalences we obtain
\[
 (B,E)\sim_Y(A,B)\sim_Y(E,A).
\]
The single-coordinate conditions also follow by projection. Lemma~\ref{lem:pairwise} realizes the cycle. Coincidences among the ideals are automatically preserved by these conditions; if a prescription forces all three ideals to be equal, any permutation of the three maximal elements extends by the identity on $Y$.

In both matchings the automorphism of $Y$ extends to $C$ sending $a$ to $b$.

\medskip\noindent\textbf{Case 2: $\ell_C(c)\ne s$.}
The ideal cardinality of the distinguished point forces the matching in \eqref{eq:pointedlocal}. Hence there exist
\begin{equation}\label{eq:phipsi}
 \phi\in\Aut(C-c),\quad\phi(b)=a,
 \qquad
 \psi:C-b\longrightarrow C-a,\quad\psi(c)=c.
\end{equation}

Let $L=\{x\in C:\ell_C(x)=s\}$. It is an antichain containing $a,b$, so it has two or three points. In the latter case write $L=\{a,b,t\}$. The point $t$ is not maximal in $C$ and cannot lie below $a$ or $b$, which have the same ideal cardinality; hence $t<c$.

Put
\[
 Y=C\setminus(L\cup\{c\}).
\]
Deleting maximal elements preserves the ideals of the surviving points. The map $\phi$ preserves $L$, whereas $\psi$ maps $L\setminus\{b\}$ onto $L\setminus\{a\}$ and preserves $c$. Thus both maps restrict to automorphisms of $Y$. Moreover $\psi(a)=b$ and, if $t$ exists, $\psi(t)=t$: among the present points in the size-$s$ class, $t$ is the unique nonmaximal one, since it lies below $c$.

For $x\in L$ record both profiles in $Y$:
\[
 \xi_x=\bigl(\{y\in Y:y<x\},\{y\in Y:x<y\}\bigr),
 \qquad E=\{y\in Y:y<c\}.
\]
The restriction of $\psi$ maps $\xi_a$ to $\xi_b$, preserves $\xi_t$ when present, and preserves $E$.

The permutation induced by $\phi$ on $L$ maps $b$ to $a$. If it is the swap of $a,b$, with $t$ fixed if present, it already realizes all pairwise conditions on the profiles for the desired swap. If it is a 3-cycle, the ordered pairs of the three profiles have at most the two orbits corresponding to the cyclic orientations. The restriction of $\psi$ provides
\[
 (\xi_a,\xi_t)\sim_Y(\xi_b,\xi_t),
\]
which identifies the two orientations. Again all pairwise conditions for swapping $\xi_a,\xi_b$ and preserving $\xi_t$ are compatible.

For pairs involving $E$, use $\psi|_Y$ and its inverse for the profiles $\xi_a,\xi_b$; for $(\xi_t,E)$ the identity suffices. Apply Lemma~\ref{lem:pairwise} and Remark~\ref{rem:tuples}. We obtain an automorphism of $Y$ swapping $\xi_a,\xi_b$ and preserving $\xi_t,E$.

Extend it by swapping $a,b$ and fixing $c$ and, if present, $t$. The class $L$ is an antichain; between $L$ and $c$ the only possible relation is $t<c$, which is preserved. All relations with $Y$ are recorded by the two coordinates of the profiles and by $E$. The extension is therefore an automorphism of $C$ sending $a$ to $b$.
\end{proof}

\section{Transitive actions on three maximal elements}
\label{sec:transitive}

We next study the symmetric situation in which the three maximal
elements are related by deletion isomorphisms. The resulting
transitivity and extension properties will be used to control the
remaining freedom in candidate colorings in the proof of sufficiency.

\begin{lemma}[Transitivity from three deletions]\label{lem:transitive}
Let $T$ be an ordered set of width at most three with exactly three maximal elements $p,q,r$, whose principal ideals have the same cardinality. Suppose that $T-p,T-q,T-r$ are isomorphic and that $q,r$ belong to the same orbit of $\Aut(T)$. Then $\Aut(T)$ is transitive on the three maximal elements.
\end{lemma}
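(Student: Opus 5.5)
The plan is to reduce everything to the common base
\[
 Y=T\setminus\{p,q,r\},
\]
and then to build a cyclic permutation of the three maximal elements from an automorphism of $Y$ obtained through Lemma~\ref{lem:pairwise}. Write $A,B,E\subseteq Y$ for the strict ideals of $p,q,r$, and let $s$ be the common cardinality of their principal ideals.

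\textbf{Step 1: every relevant map preserves $Y$.} Every nonmaximal point of $T$ lies strictly below a maximal point, so its ideal has cardinality smaller than $s$. Hence $Y$ is exactly the set of points of ideal cardinality $<s$. By Lemma~\ref{lem:upset}, ideals of surviving points do not change when a maximal element is deleted. Consequently:
\begin{itemize}
\item every automorphism of $T$ maps $Y$ onto $Y$;
\item every isomorphism $T-p\to T-q$ maps $Y$ onto $Y$ and $\{q,r\}$ onto $\{p,r\}$. A new maximal point of the card still has ideal cardinality $<s$, so it stays in $Y$.
\end{itemize}
Thus each such map restricts to an automorphism of $Y$ and carries one ordered pair of attachment ideals to another, in one of two ways.

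\textbf{Step 2: the pair equivalences.} Write $\sim$ for $\sim_Y$.
\begin{itemize}
\item \emph{Hypothesis that $q,r$ lie in one orbit.} It gives $g\in\Aut(T)$ with $g(q)=r$. If $g$ induces a $3$-cycle on $\{p,q,r\}$ we are done. Otherwise $g$ swaps $q,r$ and fixes $p$, so
\[
 (A,B,E)\sim(A,E,B),
\]
which yields $(A,B)\sim(A,E)$ and $(B,E)\sim(E,B)$.
\item \emph{Isomorphism $T-p\cong T-q$.} Its restriction to $Y$ gives either $(B,E)\sim(A,E)$ (direct matching) or $(B,E)\sim(E,A)$ (crossed matching).
\end{itemize}

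\textbf{Step 3: all six ordered pairs lie in one orbit.} Assume first that $A,B,E$ are distinct. In the direct case, chain the relations and their coordinate reversals:
\[
 (A,E)\sim(B,E)\sim(E,B),\qquad (E,A)\sim(E,B),
\]
\[
 (A,B)\sim(A,E),\qquad (B,A)\sim(E,A).
\]
So all six ordered pairs of distinct members of $\{A,B,E\}$ form a single $\Aut(Y)$-orbit. The crossed case is analogous, using $(B,E)\sim(E,A)$ together with $(B,E)\sim(E,B)$ and reversals. The single orbits of $A,B,E$ then coincide by projection. The hypothesis $T-q\cong T-r$ can be used in the same way if an extra relation turns out to be needed.

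\textbf{Step 4: realize a cycle.} The prescriptions for the cycle $A\mapsto B\mapsto E\mapsto A$ are
\begin{itemize}
\item on single coordinates: $A\mapsto B$, $B\mapsto E$, $E\mapsto A$;
\item on ordered pairs: $(A,B)\mapsto(B,E)$, $(B,E)\mapsto(E,A)$, $(A,E)\mapsto(B,A)$.
\end{itemize}
All of these are compatible by Step 3. Lemma~\ref{lem:pairwise} then yields $h\in\Aut(Y)$ realizing the cycle. Extend $h$ to $T$ by $p\mapsto q\mapsto r\mapsto p$. All relations between $Y$ and the maximal elements are recorded by $A,B,E$, and the maximal elements form an antichain. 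Hence the extension is an automorphism of $T$, and $\Aut(T)$ is transitive on $\{p,q,r\}$.

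\textbf{Coincidences.} These need a separate check.
\begin{itemize}
\item If $A=B=E$, any permutation of $p,q,r$ extends by the identity on $Y$.
\item If $B=E$, the relations from $T-p\cong T-q$ force $A$ into the orbit of $B$ with $(A,B)\sim(B,B)$, hence $A=B$, which is the previous case.
\item If $A$ equals exactly one of $B,E$, use $(A,B)\sim(A,E)$ to force the third ideal equal as well. Equality of two coordinates of a pair is an orbit invariant.
\end{itemize}

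\textbf{Main obstacle.} I expect the main difficulty in Steps 2--3. One must make sure the card isomorphism really identifies the maximal elements as claimed, since a new maximal point of the card could in principle interfere. One must also make sure that the direct and crossed matchings each produce all six ordered pairs, and that no case silently requires a pair relation that is not available. If a case falls short, I would first try bringing in the restriction of $T-q\cong T-r$, or Lemma~\ref{lem:swap}, to supply the missing relation.
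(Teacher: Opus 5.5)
Your proof is correct and follows essentially the paper's argument. You restrict to $Y$, combine the $q\leftrightarrow r$ swap with the restriction of $T-p\cong T-q$ to put all six ordered pairs of attachment ideals in one $\Aut(Y)$-orbit, and then apply Lemma~\ref{lem:pairwise}. The only differences are cosmetic: you realize a $3$-cycle directly, whereas the paper, arguing by contradiction, realizes a swap of $p,q$ fixing $r$. Your crossed-matching and coincidence cases go through as you sketched, so neither $T-q\cong T-r$ nor Lemma~\ref{lem:swap} is needed.
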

\begin{proof}
If $p$ already lies in the orbit of $q$, the conclusion follows. Otherwise the orbit of $q$ is $\{q,r\}$; there is an automorphism of $T$ swapping $q,r$ and fixing $p$.

Let $Y=T\setminus\{p,q,r\}$ and let $A_p,A_q,A_r$ be the attachment ideals in $Y$. As in the first case of Lemma~\ref{lem:local}, the deletion isomorphisms preserve $Y$: the two surviving original maximal elements have the common ideal cardinality, whereas every point of $Y$ has smaller ideal cardinality. Hence the three unordered pairs with multiplicity of the attachments belong to the same orbit under $\Aut(Y)$.

The swap of $q,r$ makes the ordered orbit of $(A_q,A_r)$ symmetric. Equality of the three unordered-pair orbits then implies that all six ordered pairs with distinct indices belong to the same orbit; the single orbits coincide as well. Lemma~\ref{lem:pairwise} realizes a swap of $A_p,A_q$ with $A_r$ fixed. It extends to $T$ by swapping $p,q$, contradicting the assumption that $p$ did not lie in the orbit of $q$.
\end{proof}

\begin{lemma}[Determination of the third subset in a cyclic triple]\label{lem:third}
Let $X$ have width at most three and let $A_0,A_1,A_2\subseteq X$. Suppose there exists $g\in\Aut(X)$ with
\[
 g(A_i)=A_{i+1\bmod 3}.
\]
Every automorphism of $X$ that preserves the multiset $\mset{A_1,A_2}$ also preserves $A_0$.
\end{lemma}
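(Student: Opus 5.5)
The plan is to reduce to a single diagonal factor via Lemma~\ref{lem:diagonal} and then argue on indices in $\{1,2,3\}$.

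First dispose of coincidences. If two of $A_0,A_1,A_2$ coincide, the cycle $g$ forces all three to be equal. For example, $A_0=A_1$ gives $A_1=g(A_0)=g(A_1)=A_2$, and the other cases are the same. In that case any automorphism preserving $\mset{A_1,A_2}$ preserves $A_1=A_0$. So assume $A_0,A_1,A_2$ are distinct. Note that $g^3(A_i)=A_i$ for every $i$.

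Now fix $h\in\Aut(X)$ preserving $\mset{A_1,A_2}$. By Lemma~\ref{lem:diagonal}, $\Aut(X)$ acts as a product of diagonal factors $S_2$, $S_3$, $C_3$ on disjoint supports, plus fixed points. A subset is therefore determined by its parts on the individual copies, and it suffices to show that $h$ preserves the part of $A_0$ on the support $F$ of each factor. Write $\sigma$ and $\tau$ for the permutations induced by $g$ and $h$ on the index set of $F$.

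\begin{itemize}
\item On fixed points there is nothing to prove.
\item If $\sigma$ has order dividing $2$, then $\sigma^3=\sigma$. Hence $\sigma$ fixes each part $A_i^F$ and $A_{i+1}^F=\sigma(A_i^F)=A_i^F$, so the three parts on $F$ coincide. Since $h$ maps the multiset $\mset{A_1^F,A_2^F}$ to itself, it preserves $A_1^F=A_0^F$. This covers every $S_2$ factor and every non-$3$-cycle in an $S_3$ factor.
\item In the remaining case $\sigma=\rho$ is a $3$-cycle and $A_i^F=\rho^i(A_0^F)$ on every copy.
\end{itemize}

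In the $3$-cycle case, look at a copy $c$ where the part of $A_0$ is nontrivial. That part is a singleton or the complement of a singleton, so it designates an index $j_c\in\{1,2,3\}$. The parts of $A_1,A_2$ on $c$ then designate $\rho(j_c)$ and $\rho^2(j_c)$. Parts that are empty or full on some copy are preserved by every automorphism. Since $h$ preserves $\mset{A_1,A_2}$, either $h$ fixes both $A_1$ and $A_2$, or it swaps them; in either case the same alternative holds on every copy at once.

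\begin{itemize}
\item If $h$ fixes both, then $\tau$ fixes $\rho(j_c)$ and $\rho^2(j_c)$ for every $c$. Hence $\tau$ fixes the remaining index $j_c$.
\item If $h$ swaps them, then $\tau$ exchanges $\rho(j_c)$ and $\rho^2(j_c)$. Being a permutation of $\{1,2,3\}$, $\tau$ must then fix the third index $j_c$.
\end{itemize}

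In both alternatives $\tau$ preserves the part of $A_0$ on every copy, hence $h(A_0)\cap F=A_0\cap F$. Combining over all factors gives $h(A_0)=A_0$.

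The point needing care is that the alternative (fix versus swap) is global rather than chosen per copy. It is global because $h$ acts on whole subsets. The diagonal structure makes $\tau$ the same permutation on all copies, so a single fix-or-swap alternative governs all copies simultaneously, and a per-copy index argument suffices.
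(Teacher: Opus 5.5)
Your proof is correct, and its skeleton matches the paper's. You reduce to the factors of Lemma~\ref{lem:diagonal}, and you dispose of projections of $g$ of order at most two via $\sigma^3=\sigma$, which forces the three restrictions to coincide.

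The difference is in the $3$-cycle case. The paper splits that case by the stabilizer of the pattern on the whole factor. For $C_3$, and for $S_3$ with trivial stabilizer, it shows that only the identity preserves the pair of distinct patterns, so a swap of $A_1,A_2$ is excluded there. It reserves the ``remaining index is fixed'' argument for the point-stabilizer case. You instead run that index argument copy by copy. You note that both alternatives, $h$ fixing $A_1,A_2$ or swapping them, force $\tau$ to fix the index designated by $A_0$. So no stabilizer classification is needed, and a swap never has to be ruled out.

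This also makes the concern in your last paragraph unnecessary. Your argument uses neither the globality of the fix-or-swap alternative nor the fact that $\tau$ is the same on all copies. It works orbit by orbit, with $g$ and $h$ restricted to each $\Aut(X)$-orbit. So it needs only that orbits have at most three points, not the full diagonal decomposition.

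The paper's version, in return, is phrased at the level of whole factors, in the same style as the proof of Lemma~\ref{lem:pairwise}. It also shows, as a by-product not needed here, that in a factor with trivial pattern stabilizer the swap case cannot occur at all.
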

\begin{proof}
Use Lemma~\ref{lem:diagonal} and restrict the subsets to the support of each factor. If the projection of $g$ has order one or two, the condition $g^3(A_i)=A_i$ implies that the three restrictions coincide; preserving the pair therefore preserves the third term as well.

It remains to consider a projection of order three, in $C_3$ or $S_3$. The restrictions are either all equal, in which case there is nothing to prove, or they are three distinct patterns cyclically permuted. In $C_3$, a noninvariant pattern has trivial stabilizer, and a nonidentity element cycles all three patterns. Hence the stabilizer of a pair of distinct patterns is trivial.

Now consider a diagonal $S_3$ factor. The stabilizer of a subset pattern on the copies is $S_3$, a point stabilizer $S_2$, or the trivial group. Indeed, the nontrivial parts of the pattern are singletons or complements of singletons; preserving them means fixing all indices they designate.

In the $S_3$ case the pattern is invariant, already handled. In the $S_2$ case all nontrivial parts designate one index. The three cyclic patterns designate the three distinct indices; preserving the pair of patterns that designate two indices forces the remaining index to be fixed, and therefore preserves the third pattern.

In the trivial-stabilizer case, write the patterns as $U,gU,g^2U$ in the factor under consideration. The unique element sending $gU$ to $g^2U$ is $g$, and this element does not send $g^2U$ to $gU$. An automorphism therefore cannot swap the two patterns in the pair. If it fixes both, it is the identity. In either case it preserves $U$.

The conclusion holds on every support and on the fixed points, hence on all of $X$. We have not required $g$ to have order three on the whole of $X$.
\end{proof}

\begin{corollario}[Extension from a card]\label{cor:extension}
Let $T$ be an ordered set of width at most three with three maximal elements on which $\Aut(T)$ acts transitively. For every $p\in\Max(T)$, every automorphism of $T-p$ extends to an automorphism of $T$ fixing $p$.
\end{corollario}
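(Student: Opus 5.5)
We reduce the statement to Lemma~\ref{lem:third}. Let $\Max(T)=\{p,q,r\}$ and put $Y=T\setminus\{p,q,r\}$, with attachment ideals $A_p,A_q,A_r\subseteq Y$ (strict ideals of the maximal elements). By transitivity all three maximal elements have the same ideal cardinality $s$, and every point of $Y$ lies below some maximal element, so it has ideal cardinality smaller than $s$ by Lemma~\ref{lem:upset}.

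\textbf{Step 1: the automorphism preserves $Y$ and permutes $\{q,r\}$.} Let $\phi\in\Aut(T-p)$. The surviving maximal elements $q,r$ keep ideal cardinality $s$ in $T-p$, since deleting a maximal element does not change the remaining ideals. The points of $Y$ keep cardinality smaller than $s$. Ideal cardinality is an isomorphism invariant, so $\phi$ maps $\{q,r\}$ onto itself and restricts to an automorphism $\phi_Y\in\Aut(Y)$. This automorphism sends the multiset $\mset{A_q,A_r}$ onto itself. If $\phi$ swaps $q,r$, then $\phi_Y(A_q)=A_r$ and $\phi_Y(A_r)=A_q$; if $\phi$ fixes them, each ideal is preserved.

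\textbf{Step 2: a cyclic automorphism of $Y$.} Transitivity of $\Aut(T)$ on the three maximal elements gives a transitive subgroup of $S_3$ induced on $\{p,q,r\}$. Every transitive subgroup of $S_3$ contains a $3$-cycle, so some $h\in\Aut(T)$ induces $p\mapsto q\mapsto r\mapsto p$. The automorphism $h$ preserves $\Max(T)$ and hence $Y$. Its restriction $g=h|_Y\in\Aut(Y)$ satisfies $g(A_p)=A_q$, $g(A_q)=A_r$ and $g(A_r)=A_p$. We have $w(Y)\le w(T)\le3$. Setting $A_0=A_p$, $A_1=A_q$, $A_2=A_r$, Lemma~\ref{lem:third} shows that $\phi_Y$ preserves $A_p$.

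\textbf{Step 3: extension.} Define $\tilde\phi$ on $T$ by $\tilde\phi=\phi$ on $T-p$ and $\tilde\phi(p)=p$. This is a bijection. Relations inside $T-p$ are preserved by $\phi$. The relations involving $p$ are of two kinds:
\begin{enumerate}[label=\textup{(\alph*)}]
\item $y<p$ for $y\in Y$, and these are preserved because $\phi_Y(A_p)=A_p$;
\item incomparability with $q$ and $r$, and these are preserved because $\phi$ permutes $\{q,r\}$.
\end{enumerate}
No point lies above $p$. Hence $\tilde\phi\in\Aut(T)$, it fixes $p$, and it extends $\phi$.

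The only nonroutine step is Step~2: producing the cyclic automorphism and verifying that it fits the hypothesis of Lemma~\ref{lem:third}, whose role is exactly to force $\phi_Y$ to preserve $A_p$. The main thing to watch is the index convention, namely that $g$ cycles the triple in the order required by that lemma with $A_0=A_p$. Reversing the cycle if necessary handles this.
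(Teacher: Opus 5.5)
Your proof is correct and follows the paper's argument essentially step for step. Ideal cardinality forces every automorphism of $T-p$ to restrict to $Y$ and preserve $\mset{A_q,A_r}$; a lift of a $3$-cycle supplies the cyclic automorphism needed for Lemma~\ref{lem:third}; and the resulting invariance of $A_p$ lets the map extend by fixing $p$, with your Step~3 simply writing out the extension check that the paper leaves implicit.
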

\begin{proof}
The three maximal ideals have the same cardinality. Put $Y=T\setminus\Max(T)$. Every automorphism of $T-p$ preserves $Y$ and the unordered pair of attachment ideals of the two surviving original maximal elements.

The image of $\Aut(T)$ on the three maximal elements is a transitive subgroup of $S_3$, and therefore contains a 3-cycle. The restriction to $Y$ of a lift of such a cycle cyclically permutes the three attachment ideals. By Lemma~\ref{lem:third}, the automorphism of the card also preserves the attachment ideal of the deleted maximal element. It therefore extends by fixing $p$.
\end{proof}

\section{Sufficiency of orbital balance}
\label{sec:sufficiency}

We now prove that orbital balance is not only necessary but sufficient
for candidate colorings arising from the three maximal cards of a single
ordered set. The proof first excludes a single effective modification
and then analyzes the only possible configurations with two effective
modifications. The conclusion is an equality of multisets of
colored-card types; individual card occurrences need not be recovered
one by one.

Retain the notation \eqref{eq:setup} and \eqref{eq:authmargins}.

\begin{lemma}[A single effective modification is impossible]\label{lem:onemod}
If a candidate assignment differs from the authentic assignment, as a colored type, on exactly one of the three card occurrences, then it does not satisfy $\BO$.
\end{lemma}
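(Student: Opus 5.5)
Take $i=1$ by symmetry, so the modified card is $C_1$ and the other two cards keep their authentic colorings. The plan is to show that $\BO$ forces the modified margin to equal the authentic margin $v_1$, and then to use Lemma~\ref{lem:local} to conclude that the colored type did not change after all, which is a contradiction.

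\textbf{Step 1: the margins.} Since $C_2$ and $C_3$ keep their authentic colorings, \eqref{eq:authmargins} gives $v'_2=v_2=\alpha+\gamma$ and $v'_3=v_3=\alpha+\beta$. A nonauthentic coloring of $C_1$ exists only if $C_1$ has three maximal elements. By Lemma~\ref{lem:newmax} these are $m_2$, $m_3$ and $r_1$. Such a coloring designates one old maximal element, say $m_j$, as new and keeps $r_1$ and $m_k$ as designated old, where $\{j,k\}=\{2,3\}$. Identify its base $B'=C_1\setminus\{r_1,m_k\}$ with $B$ by some isomorphism. Its margin is then a sum $v'_1=\rho+\kappa$ of two atoms of $\Orb(B)$, coming from $r_1$ and $m_k$. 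I will not try to identify $\kappa$ with the authentic atom of $m_k$: the base $B'$ differs from $B$, so the transported orbit could in principle be different. Only the fact that $v'_1$ is a sum of exactly two atoms will be used.

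\textbf{Step 2: parity.} We get
\[
 R=v'_1+v'_2+v'_3=2\alpha+\beta+\gamma+\rho+\kappa .
\]
$\BO$ requires every coefficient of $R$ to be even, so $\beta+\gamma+\rho+\kappa$ must have even coefficients in the free monoid $\N^{(\Orb(B))}$. A sum of four atoms has all coefficients even exactly when the atoms pair off. If $\beta\neq\gamma$, then $\beta$ and $\gamma$ each need a partner among $\rho,\kappa$, so $\mset{\rho,\kappa}=\mset{\beta,\gamma}$. If $\beta=\gamma$, then $2\beta$ is already even, so $\rho+\kappa$ must be even, which forces $\rho=\kappa$. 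In either case we want $v'_1=v_1$. In the second case this needs $\rho=\beta$, which parity alone does not give. Here I would use the second part of $\BO$, namely $v'_i\le h$, or a size count: atom sizes are determined by the ideal cardinalities, as noted after \eqref{eq:margin}. If this turns out to be insufficient, I would also bring in Lemma~\ref{lem:basefilter}. In the main case, parity alone already gives $v'_1=v_1$.

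\textbf{Step 3: from equal margins to equal types.} With $v'_1=v_1$, apply Lemma~\ref{lem:local} to the card $C_1$. Its width is at most three and it has exactly three maximal elements. Compare the two colorings that designate $r_1$ (the authentic one) and $m_j$ as new. Their bases are both isomorphic to $B$, and after identification their orbital margins coincide. Lemma~\ref{lem:local} then gives an isomorphism of the colored cards. So the modified coloring has the same colored type as the authentic one on this occurrence. This contradicts the hypothesis that the assignment differs from the authentic one as a colored type on exactly one card. Hence $\BO$ fails.

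\textbf{Main obstacle.} The delicate point is Step 2. It consists of justifying that $v'_1$ is well defined in $\N^{(\Orb(B))}$, which rests only on $B'\cong B$, and settling the degenerate case $\beta=\gamma$. Once $v'_1=v_1$ is established, Lemma~\ref{lem:local} does all the remaining work.
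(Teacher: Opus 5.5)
Your route is the paper's: only one card is modified, parity in $\BO$ pins down its margin, and Lemma~\ref{lem:local} turns equality of margins into equality of colored types. The case $\beta\neq\gamma$ is complete and agrees with the paper. The case $\beta=\gamma$, however, is not proved. You name candidate tools (``$v'_i\le h$, or a size count'', possibly Lemma~\ref{lem:basefilter}) but do not carry out the argument. The ``or'' is also misleading, because neither tool works on its own.

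With $\rho=\kappa=\delta$ you get $h=\alpha+\beta+\delta$, and the domination condition $v'_1=2\delta\le h$ yields only $\delta\in\{\alpha,\beta\}$. The alternative $\delta=\alpha$ formally satisfies every part of $\BO$ (then $R=4\alpha+2\beta$ and $h=2\alpha+\beta$), so domination cannot exclude it. The size count does exclude it:
\begin{itemize}
\item $\rho$ is the atom of $r_1$, whose attachment size is $\ell_{C_1}(r_1)-1=\ell_P(r_1)-1=d_j-1$ by Lemma~\ref{lem:upset};
\item \eqref{eq:sizeineq} gives $d_j<d_1$;
\item $\alpha=[D_1]_G$ has size $d_1-1$.
\end{itemize}
Hence $\delta=\beta$ and $v'_1=2\beta=v_1$, after which your Step~3 applies. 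Size alone would not suffice either, since several orbits may share an attachment size. Lemma~\ref{lem:basefilter} is needed only through \eqref{eq:sizeineq}, which is already available. This combination is exactly how the paper settles the case, so adding these lines completes your proof.
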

\begin{proof}
For this proof only, let $\alpha,\beta$ be the atoms of the two surviving maximal elements on the modified card, and let $\gamma$ be the atom of the deleted maximal element. The other two margins sum to $\alpha+\beta+2\gamma$. If $v'$ is the modified margin, parity in $\BO$ requires every coefficient of $\alpha+\beta+v'$ to be even.

If $\alpha\ne\beta$, since $v'$ has total mass two, necessarily $v'=\alpha+\beta$. Lemma~\ref{lem:local} then makes the two colored types on the card equal, contrary to the hypothesis.

If $\alpha=\beta$, parity forces $v'=2\delta$ for an atom $\delta$.

If $\alpha=[D_j]_G$ and $\beta=[D_k]_G$, then
$\alpha=\beta$ implies $|D_j|=|D_k|$, hence $d_j=d_k$.
Half of the total sum is $h=\alpha+\delta+\gamma$, and the domination condition $v'\le h$ requires
\[
 2\delta\le\alpha+\delta+\gamma.
\]
Consequently $\delta=\alpha$ or $\delta=\gamma$. The modified coloring exchanges the roles of the authentic new maximal element and one of the old ones. By \eqref{eq:sizeineq}, all three maximal elements of this card have the ideal cardinality of the old maximal elements, strictly smaller than the ideal cardinality of the deleted point. Thus the attachment size of $\delta$ is that of $\alpha$, smaller than that of $\gamma$. The alternative $\delta=\gamma$ is impossible. Hence $v'=2\alpha$, and Lemma~\ref{lem:local} applies again.
\end{proof}

\begin{teorema}[Sufficiency of $\BO$]\label{thm:BO}
Let $P$ be a finite ordered set of width at most three with three
maximal elements $m_1,m_2,m_3$, and put
$C_i=P-m_i$ for $i=1,2,3$. On each concrete card $C_i$ choose a
candidate coloring, in the sense of Definition~\ref{def:candidate},
with base isomorphic to $B=P\setminus\Max(P)$.
If the three margins satisfy $\BO$, then the multiset of colored-card types is $\MMD(P)$.
\end{teorema}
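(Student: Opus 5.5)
We compare the given assignment with the authentic one card by card. Call a card occurrence \emph{effectively modified} if its candidate colored type differs from the authentic colored type of $C_i$. If no card is effectively modified there is nothing to prove. By Lemma~\ref{lem:onemod}, exactly one effective modification is impossible under $\BO$. It therefore remains to treat two or three effective modifications. In each case we show either that $\BO$ fails, or that the modified multiset of colored types coincides with $\MMD(P)$, possibly after permuting the card occurrences.

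\textbf{Constraints from the sizes.} Order $d_1\le d_2\le d_3$. By \eqref{eq:sizeineq} and the table in Corollary~\ref{cor:four}, a nonauthentic coloring of $C_i$ designates some $m_j$ with $d_j<d_i$ as new, and then $\ell_P(r_i)=d_j$. So the card with minimal $d$ is never modified, and at most two cards are modified. The surviving cases are $d_1<d_2\le d_3$ and $d_1=d_2<d_3$. In the first case $C_2$ may swap $m_1\leftrightarrow r_2$, and $C_3$ may swap $m_1$ or $m_2$ (only those of the correct size) with $r_3$. In the second case only $C_3$ can be modified, so a single modification occurs and this case is already excluded. In the remaining case we write the modified margins explicitly. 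For $C_2$ the new margin is $\rho_2+\gamma$, where $\rho_2=[\,\downarrow r_2\setminus\{r_2\}]_G$ has the size of $\alpha$. For $C_3$ it is $\rho_3+\beta$ or $\alpha+\rho_3$. Together with $v_1=\beta+\gamma$, we impose parity and domination exactly as in the proof of Lemma~\ref{lem:onemod}, comparing attachment sizes to separate atoms. Since atoms of different sizes are distinct, parity should force $\rho_2=\alpha$, or else a coincidence such as $\rho_2=\rho_3$ with $\beta=\alpha$, and similarly for $C_3$.

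\textbf{Realizing the equalities.} Whenever the atom computation forces a modified card to have the same margin as the authentic one on the same card, Lemma~\ref{lem:local} identifies the colored types, and that card was not effectively modified. This reduces to one modification, contradicting Lemma~\ref{lem:onemod}. The genuinely new configuration is the crossed one. There the margins are equal only as a total multiset: the modified margin of $C_2$ equals the authentic margin of $C_3$, and conversely. This typically happens when $d_2=d_3$ and the atoms satisfy $\beta=\gamma$, or when the three attachments lie in a symmetric position. In that configuration we plan to build an isomorphism of colored cards $C_2^{\mathrm{mod}}\cong C_3$ and $C_3^{\mathrm{mod}}\cong C_2$. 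We translate margin equalities into pointed-type equalities via Lemma~\ref{lem:pointed}, obtain pairwise $\Aut(B)$-equivalences of ordered pairs of attachment ideals, and combine them into single automorphisms with Lemma~\ref{lem:pairwise}, Remark~\ref{rem:tuples} and the swap completion Lemma~\ref{lem:swap}. When the three maximal elements of a card are mutually deletion-isomorphic, Lemma~\ref{lem:transitive} gives transitivity and Corollary~\ref{cor:extension} lets automorphisms of a card lift, so the unordered pair of attachments is carried correctly.

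\textbf{Main obstacle.} The hardest step will be this crossed two-modification case. The margins forget the relative position of the two attachments, and Lemma~\ref{lem:local} is only available within a single card. We must pass from the orbit equalities given by $\BO$ to a joint equality of unordered pairs across two different cards. My first attempt will express all relevant attachments ($D_1,D_2,D_3$ and the ideals of $r_2,r_3$) inside the common base $B$, using Lemma~\ref{lem:upset} to see that they are unchanged. I will then verify the three hypotheses of Lemma~\ref{lem:swap}, taking $N$ to be the attachment of the untouched maximal element, using the deletion isomorphisms $C_2^{\mathrm{mod}}\cong\dots$ supplied by the unmarked deck together with the base isomorphisms of the candidate colorings.
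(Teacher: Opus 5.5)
\paragraph{There is a genuine gap in how you compute the margins of the modified cards.}
On a modified card, the margin is taken after transporting the \emph{candidate} base $B_i\neq B$ to $B$ by an abstract isomorphism $\lambda$. So by Lemma~\ref{lem:pointed}, the atoms on $C_2$ (new element $r$, retained $m_3$) are $\pt{P\setminus\{m_2,m_3\}}{r}$ and $\pt{P\setminus\{m_2,r\}}{m_3}$.
\begin{itemize}
\item The first atom is not the $G$-orbit of the strict ideal of $r$ read inside $B$.
\item The second need not equal $\gamma=\pt{P\setminus\{m_1,m_2\}}{m_3}$.
\item Likewise, when $C_3$ exchanges $m_2$ with its new element $s$, the retained $m_1$ has atom $\pt{P\setminus\{s,m_3\}}{m_1}$, not $\alpha=\pt{P\setminus\{m_2,m_3\}}{m_1}$.
\end{itemize}
Your formulas $v_2'=\rho_2+\gamma$ and $v_3'=\alpha+\rho_3$ therefore assume what has to be proved. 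Reading the attachments inside $B$ via Lemma~\ref{lem:upset} discards exactly the information carried by $\lambda$.

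Write instead $v_2'=\delta_2+\eta_2$ and $v_3'=\delta_3+\eta_3$. Then $\BO$ yields only:
\begin{itemize}
\item $\delta_2=\delta_3=:\delta$, since these are the only two atoms of size $a-1$;
\item $\mset{\eta_2,\eta_3}=\mset{\beta,\gamma}$, with individually $\eta_2=\gamma$ and $\eta_3=\beta$ when $b<c$.
\end{itemize}
Nothing in parity or domination gives $\delta=\alpha$. Even in your own bookkeeping, when both cards exchange with $m_1$, $\BO$ holds for any common value $\rho_2=\rho_3$. The alternative ``$\beta=\alpha$'' you mention cannot occur, because the attachment sizes differ.

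\paragraph{Establishing $\delta=\alpha$ is the real content of the proof, and your plan does not do it.}

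\emph{Both modifications exchange with $m_1$.} Work in $T=P\setminus\{m_2,m_3\}=B+m_1$. Its maximal elements $m_1,r,s$ have pairwise isomorphic deletions. The equality $\delta_2=\delta_3$ says $\pt{T}{r}=\pt{T}{s}$, so Lemma~\ref{lem:transitive} gives transitivity and hence $\delta=\alpha$. Corollary~\ref{cor:extension} then shows that $D_1$ is $G$-invariant. So each colored type is determined by its remaining orbit, and a possible crossing of $\eta_2,\eta_3$ leaves the multiset unchanged. You name these two results, but without identifying $T$ or the orbit equality that triggers them. You also propose Lemma~\ref{lem:swap} for this crossed case, where it is not needed.

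\emph{$C_3$ exchanges $m_2$, forcing $a<b<c$.} Your bookkeeping would close this case by parity and Lemma~\ref{lem:local}, but it is the hard one. The paper's argument runs as follows.
\begin{itemize}
\item From $\eta_3=\beta$ one gets an automorphism $\phi$ of $B+m_2$ swapping $m_2$ and $s$, and a further point $r'=\phi(r)$.
\item The ideal-cardinality classes are then exactly $\{m_1,r,r'\}$, $\{m_2,s\}$ and $\{m_3\}$.
\item Deleting these six points gives the lower suborder $K$. Unlike $B$, it is common to all the ordered sets being compared and preserved by every map involved.
\item Three maps restrict to $K$: $\phi$, a pointed isomorphism realizing $\delta_2=\delta_3$, and one realizing $\eta_2=\gamma$. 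Their restrictions supply precisely the three hypotheses of Lemma~\ref{lem:swap} for the predecessor sets of $m_1,r,r',s$ in $K$.
\item The resulting swap extends to $T$ and yields $\delta=\alpha$. Only then does Lemma~\ref{lem:local} finish.
\end{itemize}
Your proposal anticipates neither the passage to $K$ nor the role of $r'$.
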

\begin{proof}
By Lemma~\ref{lem:necessary}, the authentic assignment satisfies $\BO$. Suppose there is an assignment satisfying $\BO$ but producing a multiset of colored types different from the authentic one.

If a coloring already has the authentic type on its occurrence, replace it by the authentic coloring. This replacement changes neither the margin nor the multiset of colored types. After this normalization, every nonauthentic coloring is genuinely different in type.

A card deleting a maximal element with minimal $d_i$ has forced coloring. If the minimum were repeated, there would be at most one effective modification, excluded by Lemma~\ref{lem:onemod}. The same contradiction arises if there is only one effective modification for any other reason. We may therefore renumber the maximal elements so that
\begin{equation}\label{eq:abc}
 d_1=a<d_2=b\le d_3=c,
\end{equation}
with the first card authentic and the other two effectively modified.

\subsection*{Reduction of the margins}
The first card has margin $v'_1=\beta+\gamma$. By \eqref{eq:sizeineq}, the second card can exchange its new maximal element only with $m_1$. Denote this new maximal element by $r$; then
\begin{equation}\label{eq:r}
 \ell_P(r)=a,\qquad \{x\in P:r<x\}=\{m_2\}.
\end{equation}
The third card has a new maximal element $s$, distinct from $r$, whose unique strict upper bound is $m_3$. The modification may involve $m_1$, or $m_2$ when $b<c$.

Write
\begin{equation}\label{eq:modifiedmargins}
 v'_2=\delta_2+\eta_2,\qquad v'_3=\delta_3+\eta_3,
\end{equation}
where the attachment sizes are respectively $a-1,c-1$ on the second card and $a-1,b-1$ on the third. Indeed, on the third card either $m_2$ is retained and the new $s$ has ideal cardinality $a$, or $m_1$ is retained and the new $s$ has ideal cardinality $b$.

The atoms $\delta_2,\delta_3$ are the only two occurrences with attachment size $a-1$. Parity therefore forces
\begin{equation}\label{eq:deltaequal}
 \delta_2=\delta_3=: \delta.
\end{equation}
Moreover,
\begin{equation}\label{eq:etaequal}
 \mset{\eta_2,\eta_3}=\mset{\beta,\gamma}.
\end{equation}
If $b<c$, parity at the two distinct attachment sizes gives individually $\eta_2=\gamma$ and $\eta_3=\beta$. If $b=c$ and $\beta\ne\gamma$, parity gives \eqref{eq:etaequal}. Finally, if $b=c$ and $\beta=\gamma$, parity forces $\eta_2=\eta_3=\eta$; domination of the fixed margin $v'_1=2\beta$ by half the total sum gives $2\beta\le\beta+\eta$ at attachment size $b-1$, and hence $\eta=\beta$.

\subsection*{Case I: both modifications involve
\texorpdfstring{$m_1$}{m1}}
In this case $\ell_P(s)=a$ and the unique strict upper bound of $s$ is $m_3$. Put
\[
 T=P\setminus\{m_2,m_3\}=B+m_1.
\]
The three points $m_1,r,s$ are maximal in $T$ and have ideals of cardinality $a$. There are no other maximal elements, by the width bound.

The two candidate bases are $T-r$ and $T-s$, respectively; the authentic base is $T-m_1=B$. Hence
\[
 T-m_1\cong T-r\cong T-s.
\]
By the pointed interpretation, the small atoms of the modified cards are $\pt{T}{r}$ and $\pt{T}{s}$. Their equality in \eqref{eq:deltaequal} puts $r,s$ in the same orbit of $\Aut(T)$. Lemma~\ref{lem:transitive} gives transitivity on the three maximal elements. Hence
\begin{equation}\label{eq:deltaalphaI}
 \delta=\pt{T}{r}=\pt{T}{m_1}=\alpha.
\end{equation}

By Corollary~\ref{cor:extension}, every automorphism of $B=T-m_1$ extends while fixing $m_1$; hence it preserves $D_1$. The orbit $[D_1]_G=\alpha$ is therefore a singleton as an orbit of subsets of $B$.

An attachment pair containing an attachment from the orbit $\alpha$ is thus determined, up to $G$, by the single orbit of the other attachment. Indeed, the attachment from the orbit $\alpha$ must be the subset $D_1$ itself, and every automorphism realizing the image of the other attachment preserves $D_1$.

After transporting each candidate base to $B$, both modified cards have $D_1$ as one of their attachment ideals.
By \eqref{eq:etaequal}, the multiset of the other two orbit types is exactly the authentic one. It follows that the multiset of the two colored-card types is equal to the authentic one; if $b=c$, the two types may be exchanged between the two occurrences, but the deck does not change. The first card was already authentic. This contradicts the assumption of a different colored deck.

\subsection*{Case II: the third modification involves
\texorpdfstring{$m_2$}{m2}}
Now $\ell_P(s)=b<c$, so $a<b<c$. The large orbits are individually authentic:
\begin{equation}\label{eq:bigII}
 \eta_2=\gamma,\qquad\eta_3=\beta.
\end{equation}
On the second card the designated old maximal elements are $r,m_3$; on the third they are $m_1,s$. Put
\[
 U=P\setminus\{m_1,m_3\}=B+m_2.
\]
The atoms $\eta_3$ and $\beta$ are respectively $\pt{U}{s}$ and $\pt{U}{m_2}$. Their equality yields an automorphism $\phi$ of $U$ with $\phi(m_2)=s$.

\paragraph{The ideal-cardinality classes.}
The points $m_2,s$ are maximal in $U$. The orbit of $m_2$ under $\Aut(U)$ cannot contain a third point $t$: such a point would be maximal in $U$ and would have ideal cardinality $b>a$, so it would be incomparable with $m_1$ in $P$. The four points $m_1,m_2,s,t$ would form an antichain. Thus the orbit is $\{m_2,s\}$, and $\phi$ swaps them.

Let $r'=\phi(r)$. In $U$, the only strict upper bound of $r$ is $m_2$, so the only strict upper bound of $r'$ is $s$. Moreover $r'\ne r$ and $\ell_P(r')=a$, because deleting $m_1,m_3$ preserves the ideals of surviving points. The three points $m_1,r,r'$ exhaust the class of ideal cardinality $a$, which is an antichain. In $U$ this class consists only of $r,r'$, so $\phi$ swaps them. Passing from $U$ back to $P$, $r'$ cannot lie below $m_1$, which has the same ideal cardinality; its strict upper bounds in $P$ are exactly $s,m_3$.

Every additional maximal element $t$ of $U$ must lie below $m_1$: otherwise the antichain $\{m_1,m_2,s,t\}$ would again arise. Hence it has ideal cardinality smaller than $a$. Every point of $U$ lies below one of its maximal elements. A point of $U$ with ideal cardinality $b$ cannot lie strictly below $m_2$ or $s$, and cannot lie below any other maximal element, whose ideal has cardinality smaller than $a$. Therefore the size-$b$ class in $P$ is exactly $\{m_2,s\}$. Finally, $m_3$ is the unique point of $P$ with ideal cardinality $c$, since every other point lies below one of the three maximal elements and ideal cardinality strictly increases along the order.

Thus we have proved
\begin{equation}\label{eq:classesII}
 \begin{aligned}
 \{x\in P:\ell_P(x)=a\}&=\{m_1,r,r'\},\\
 \{x\in P:\ell_P(x)=b\}&=\{m_2,s\},\\
 \{x\in P:\ell_P(x)=c\}&=\{m_3\}.
 \end{aligned}
\end{equation}

\paragraph{A common lower suborder.}
Put
\begin{equation}\label{eq:K}
 K=P\setminus\{m_1,r,r',m_2,s,m_3\}.
\end{equation}
The six deleted points form an up-set. The relations among them are exactly those of the three disjoint chains
\begin{equation}\label{eq:upperchains}
 \{m_1\},\qquad r<m_2,\qquad r'<s<m_3.
\end{equation}
There are no cross-relations, by the descriptions already obtained of the strict upper bounds of $r,r',s$.

Define the six down-sets of $K$
\begin{equation}\label{eq:profiles}
\begin{aligned}
 A&=\{x\in K:x<m_1\},&R&=\{x\in K:x<r\},&S&=\{x\in K:x<r'\},\\
 M&=\{x\in K:x<m_2\},&N&=\{x\in K:x<s\},&W&=\{x\in K:x<m_3\}.
\end{aligned}
\end{equation}
We have $R\subseteq M$ and $S\subseteq N\subseteq W$. The ordered set $P$ is completely described by $K$, the chains \eqref{eq:upperchains}, and these predecessor ideals. None of the six points can lie below a point of $K$, since the six form an up-set.

All maps used below preserve $K$. Indeed, in each structure being compared, $K$ is the complement of the still-present classes of ideal cardinalities $a,b,c$. By \eqref{eq:classesII}, these classes are exactly those stated, and every deletion considered is an up-set. In particular, $\{s,m_3\}$ and $\{r,m_2\}$ are up-sets, even though they contain points that are not maximal in $P$. Lemma~\ref{lem:upset} therefore allows the original ideal cardinalities to be used in all these structures.

\paragraph{First map.}
The automorphism $\phi$ of $U$ swaps $m_2,s$ and $r,r'$. Its restriction $h\in\Aut(K)$ satisfies
\begin{equation}\label{eq:h}
 h(R)=S,\qquad h(S)=R.
\end{equation}

\paragraph{Second map.}
Equality of the small atoms in \eqref{eq:deltaequal} is equivalent to
\begin{equation}\label{eq:smallpointedII}
 \pt{P\setminus\{m_2,m_3\}}{r}
 =\pt{P\setminus\{s,m_3\}}{m_1}.
\end{equation}
Let $f$ be a pointed isomorphism between these ordered sets. By definition $f(r)=m_1$. The unique point with ideal cardinality $b$ is $s$ in the domain and $m_2$ in the codomain; hence $f(s)=m_2$. Among the three points with ideal cardinality $a$, the unique nonmaximal point is $r'$ in the domain and $r$ in the codomain. Thus $f(r')=r$ and, finally, $f(m_1)=r'$.

The restriction $k\in\Aut(K)$ realizes
\begin{equation}\label{eq:k}
 k(R)=A,\qquad k(A)=S,\qquad k(S)=R.
\end{equation}

\paragraph{Third map.}
Equality $\eta_2=\gamma$ in \eqref{eq:bigII} gives a pointed isomorphism
\begin{equation}\label{eq:largepointedII}
 \bigl(P\setminus\{m_1,m_2\},m_3\bigr)
 \cong
 \bigl(P\setminus\{r,m_2\},m_3\bigr).
\end{equation}
It fixes $s,m_3$, recognizable by the values $b,c$. In the ideal-cardinality-$a$ class, $r'$ is the unique nonmaximal point on both sides; hence the isomorphism sends $r'$ to $r'$ and $r$ to $m_1$. The restriction $g\in\Aut(K)$ satisfies
\begin{equation}\label{eq:g}
 g(R)=A,\qquad g(S)=S,\qquad g(N)=N.
\end{equation}

\paragraph{Closing the case.}
The maps \eqref{eq:h}--\eqref{eq:g} provide the swap of $R,S$, the cycle $R\mapsto A\mapsto S\mapsto R$, and the correspondence $R\mapsto A$ with $N$ preserved. By Lemma~\ref{lem:swap}, there exists $u\in\Aut(K)$ such that
\begin{equation}\label{eq:u}
 u(A)=R,\qquad u(R)=A,\qquad u(S)=S,\qquad u(N)=N.
\end{equation}

This automorphism extends to $T=P\setminus\{m_2,m_3\}$ by swapping $m_1,r$ and fixing $r',s$. Indeed, among these four points the only comparability is $r'<s$, and their predecessor sets in $K$ are exactly $A,R,S,N$. No condition on $W$ is needed, because $m_3\notin T$.

Therefore
\[
 \pt{T}{r}=\pt{T}{m_1},\qquad\text{that is,}\qquad\delta=\alpha.
\]
By \eqref{eq:bigII}, both modified margins are now individually authentic. Lemma~\ref{lem:local} then makes the colored types of the two cards authentic as well. We again contradict the initial assumption.

The two cases exhaust the possible modifications allowed by \eqref{eq:sizeineq}; the proof is complete.
\end{proof}

\begin{corollario}[Recovery of the marked deck]\label{cor:recovery}
For a finite ordered set of width at most three with three maximal elements, the type of $B_P=P\setminus\Max(P)$ and the unmarked maximal deck $\MD(P)$ determine $\MMD(P)$.
\end{corollario}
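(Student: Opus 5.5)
The plan is to describe a procedure that uses only $[B_P]$ and $\MD(P)$ and outputs a multiset of colored-card types. Theorem~\ref{thm:BO} then shows that this output is $\MMD(P)$. The key point is that Theorem~\ref{thm:BO} speaks about candidate colorings on the concrete cards $C_i=P-m_i$. Every ingredient it needs (candidate colorings, their bases, the margins in $\N^{(\Orb(B))}$, and the test $\BO$) depends only on the isomorphism type of each card and on $[B]$. So the procedure can be run on abstract representatives of the types in $\MD(P)$.

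First I fix a representative $B$ of $[B_P]$ and, for each of the three occurrences in $\MD(P)$, a representative card $C$. On each representative I list all candidate colorings in the sense of Definition~\ref{def:candidate}. These are the two-element sets $O\subseteq\Max(C)$ with $C\setminus O\cong B$. The list is nonempty, because the authentic coloring, transported along an isomorphism $C\cong C_i$, is a candidate. For each candidate I compute the margin \eqref{eq:margin} via an isomorphism $\lambda$ onto $B$. The margin is independent of $\lambda$, as noted after \eqref{eq:margin}. I also record the colored type $[(C,B')]$.

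Next I choose one candidate per occurrence so that the three margins satisfy $\BO$, and I output the multiset of colored types. Such a choice exists by Lemma~\ref{lem:necessary}. To see that any such choice gives $\MMD(P)$, I transport it back along isomorphisms $C\cong C_i$ to candidate colorings of the concrete cards $C_i$. Isomorphisms of cards carry candidate colorings to candidate colorings, and they preserve both the margin and the colored type. The transported assignment satisfies $\BO$, so Theorem~\ref{thm:BO} says its multiset of colored types is $\MMD(P)$. One bookkeeping point needs care: cards with equal type may be matched to the occurrences $C_i$ in any order. Since $\BO$ and the output multiset are both symmetric in the three occurrences, this causes no problem.

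Finally I conclude determination. Let $P'$ be another ordered set with the same hypotheses (width at most three, three maximal elements), with $[B_{P'}]=[B_P]$ and $\MD(P')=\MD(P)$. Then the procedure has the same inputs for $P$ and $P'$ and can be run with the same choice. Hence $\MMD(P)=\MMD(P')$. The main obstacle is the bookkeeping in the transport step: checking that the whole procedure is invariant under the choice of representatives. The mathematical content sits entirely in Theorem~\ref{thm:BO} and Lemma~\ref{lem:necessary}.
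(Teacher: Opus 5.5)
Your proposal is correct and follows the paper's proof. You enumerate the candidate colorings on representatives of $[B_P]$ and of the three occurrences in $\MD(P)$, note that the authentic assignment satisfies $\BO$ (Lemma~\ref{lem:necessary}), and invoke Theorem~\ref{thm:BO}. Your explicit transport bookkeeping is what the paper itself spells out in Section~\ref{sec:reconstruction}, and the paper's appeal to Corollary~\ref{cor:four} only bounds the number of assignments to test; it is not needed for determination.
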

\begin{proof}
Choose a representative of the type of $B_P$ and representatives of the three occurrences in $\MD(P)$. Enumerate all candidate colorings with base isomorphic to $B_P$ and compute their margins. The authentic coloring is among the possibilities and satisfies $\BO$. By Theorem~\ref{thm:BO}, all assignments satisfying $\BO$ yield the same multiset of colored types, namely $\MMD(P)$. Corollary~\ref{cor:four} limits to four the assignments that must be subjected to this test. No complexity bound for the isomorphism checks is inferred from this.
\end{proof}

\section{Passage to the full deck}
\label{sec:reconstruction}

It remains to combine the internal recovery theorem with the
reconstruction results collected in Section~\ref{sec:external}.
After the previously known cases are handled, the only remaining situation is
width three with exactly three maximal elements.

In that case the common
base and the unmarked maximal deck are available from the full deck,
Corollary~\ref{cor:recovery} recovers the marked maximal deck, and
Result~\ref{ext:marked} completes the reconstruction.

\begin{proof}[Proof of Theorem~\ref{thm:target}]
Let $P,Q$ be finite, with $|P|\ge4$, $w(P)\le3$, and $\Deck(P)=\Deck(Q)$. Their cardinalities are equal, because the cardinality is the number of occurrences in the deck. By Result~\ref{ext:parameters}, their widths, numbers of maximal elements, and connectedness also agree.

If $P$ is disconnected, has a greatest or a least element, or has width at most two, apply Result~\ref{ext:easy}. We may therefore assume that $P,Q$ are connected of width three. If they have two maximal elements, apply Result~\ref{ext:two}. With a single maximal element, finiteness implies that it is a greatest element, so that case is already covered. The only remaining case is that of three maximal elements.

By Results~\ref{ext:base} and~\ref{ext:unmarked},
\begin{equation}\label{eq:samedata}
 B_P\cong B_Q,\qquad \MD(P)=\MD(Q).
\end{equation}
Corollary~\ref{cor:recovery} implies $\MMD(P)=\MMD(Q)$. To make the treatment of labels and multiplicities explicit, one may argue directly as follows.

Match the three occurrences of the maximal cards of $P$ with those of $Q$, using equality of the multisets in \eqref{eq:samedata}. For each match choose an unmarked isomorphism. Transport to the card of $P$ the authentic subset $B_Q$ on the matched card of $Q$. In this way we obtain three candidate colorings of cards of the actual ordered set $P$, with bases isomorphic to $B_P$.

Fix an isomorphism $\lambda:B_Q\to B_P$. It identifies the orbits of down-sets in the two bases. To compute the margin on each card of $P$, choose the identification of the transported base with $B_P$ given by $\lambda$ composed with the card isomorphism to $Q$. Then the three margins are exactly the authentic margins of $Q$, transported into the same monoid $\N^{(\Orb(B_P))}$. Hence they satisfy $\BO$. Different choices of identifications do not change the margins.

Theorem~\ref{thm:BO}, applied to $P$, says that the transported colored deck is $\MMD(P)$. On the other hand, by construction it has the same multiset of types as $\MMD(Q)$. Thus the marked decks are equal.

Finally retain the original equality $\Deck(P)=\Deck(Q)$ and apply Result~\ref{ext:marked}. We obtain $P\cong Q$.
\end{proof}

\begin{osservazione}[Limits of the statement]
The theorem concerns the full deck as a multiset. It does not discard multiplicities, does not treat reconstruction from a single subdeck, and does not concern infinite ordered sets. The internal result recovering the colorings does not assume connectedness or a fixed number of minimal elements; such hypotheses were not introduced in the balance lemmas.
\end{osservazione}

\section{Conclusions}
\label{sec:conclusions}

We have proved that every finite ordered set of width at most
three and cardinality at least four is reconstructible from
its full deck. This settles the width-three part of the open
problem formulated by Schröder in
\cite[Chapter~2, Remarks and Open Problems, item~9, p.~50]{S16}.

The main new ingredient is the sufficiency of orbital balance
for candidate colorings of the three maximal cards of an
ordered set. The restricted structure of automorphism actions
in width at most three makes it possible to recover the marked
maximal deck from the unmarked maximal deck and the
isomorphism type of the common base. This recovery concerns
the multiset of marked-card types, rather than the marking of
each individual card occurrence. Combined with the existing
reconstruction results of \cite{S03,S25}, it yields
reconstructibility from the full deck.

The compatibility principle underlying the orbital-balance
argument depends essentially on the width restriction: it
already fails for subsets of a four-point antichain. Thus the
present method does not directly extend to arbitrary-width
ordered sets. In particular, the separate reconstruction
question for ordered sets of height one and arbitrary width
is not addressed here.

\appendix
\clearpage
\section{A local reattachment argument}
\label{app:repair}

This appendix concerns a reattachment step in Subcase~1.1 of the
proof of \cite[Proposition~7.3]{S03}. We give an example showing that
the stated bound on the number of lower covers does not follow from
the immediate hypotheses, and replace that bound by uniqueness at
each rank. The example concerns the intermediate sentence, not the
reconstructibility statement of \cite[Theorem~7.4]{S03}.

The argument below repairs only this indicated step. The remaining
parts of the two-maximal-element proof are taken from \cite{S03}; in
particular, the final reconstruction statement is
\cite[Theorem~7.4]{S03}.

\subsection{The intermediate sentence and an example}
We write $x\prec y$ when $x<y$ and there is no $z$ with $x<z<y$.
The immediate hypotheses at that point are
\[
 \Max(P)=\{a,b\},\qquad w(P)=3,\qquad
 |\downarrow_Pb|\ge|\downarrow_Pa|+2,
\]
with $b$ having more than one lower cover, $C=P-b$ having exactly two maximal elements $a,c$, and $\downarrow_Ca\cong\downarrow_Cc$. The text states that $b$ has exactly two lower covers: $c$ and one further nonmaximal point. As printed, this conclusion is stronger than what follows from the immediate hypotheses.

Indeed, consider
\[
 P=\{p,q,r,u,v,a,c,b\}
\]
with the following covering relations and only the further relations forced by transitivity:
\begin{equation}\label{eq:countercovers}
 p\prec u\prec a,\quad q\prec a,\quad
 r\prec v\prec c,\quad p\prec c,\quad
 u\prec b,\quad q\prec b,\quad c\prec b.
\end{equation}
The chains $p<u<a$, $r<v<c<b$, and $\{q\}$ partition $P$, whereas $\{p,q,r\}$ is an antichain; hence $w(P)=3$. We have
\[
 \Max(P)=\{a,b\},\qquad\Max(C)=\{a,c\},
\]
\[
 \downarrow_Pa=\{p,u,q,a\},\qquad
 \downarrow_Pc=\{r,v,p,c\},\qquad
 |\downarrow_Pb|=7.
\]
The two four-point ideals are isomorphic via
$p\mapsto r$, $u\mapsto v$, $q\mapsto p$, $a\mapsto c$. Nevertheless, the lower covers of $b$ are
\[
 \{u,q,c\},
\]
which has cardinality three. This is a counterexample to the indicated sentence, not to reconstructibility of $P$ or to Theorem~7.4 of the source.

The example is not a small-cardinality phenomenon. For $L>S\ge1$, take three disjoint chains
\[
 X=(x_0<\cdots<x_{L-1}),\quad
 Q=(q_0<\cdots<q_{S-1}),\quad
 R=(r_0<\cdots<r_{L-1})
\]
and three new points $a,c,b$, adding the covering relations
\[
 x_{L-1}\prec a,\quad q_{S-1}\prec a,\quad
 r_{L-1}\prec c,\quad x_{S-1}\prec c,
\]
\[
 c\prec b,\quad x_{L-1}\prec b,\quad q_{S-1}\prec b.
\]
The chains $X+a$, $R+c+b$, and $Q$ give width at most three; their three initial points are incomparable. The ideals of $a,c$ are isomorphic and have cardinality $L+S+1$, while the ideal of $b$ has cardinality $2L+S+2$. The three lower covers of $b$ are $c,x_{L-1},q_{S-1}$. The cardinalities of these examples are $2L+S+3$ and are unbounded.

\subsection{The correct property: uniqueness by rank}
\begin{lemma}\label{lem:rankrepair}
Let $C$ be an ordered set of width at most three with exactly two maximal elements $a,c$ and $\downarrow_Ca\cong\downarrow_Cc$. Put $h=\rk_C(a)=\rk_C(c)$. For every $k<h$, there is at most one point of rank $k$ that is not below $c$, and symmetrically with $a,c$ interchanged.
\end{lemma}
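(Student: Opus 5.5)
The plan is a counting argument on a single rank level, using only three facts: every point lies below $a$ or below $c$; ranks are intrinsic to down-sets; and the width bound caps each rank level at three points. Suppose, for a contradiction, that for some $k<h$ there are two distinct points $x,y$ of rank $k$, neither below $c$.

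First I would record the elementary facts.
\begin{itemize}
\item Since $C$ is finite with $\Max(C)=\{a,c\}$, every point lies below $a$ or below $c$. So $x,y\le a$, and in fact $x,y<a$, because $\rk_C(a)=h>k$.
\item For a down-set $D$ of $C$ and $u\in D$, every chain ending at $u$ lies in $D$. Hence $\rk_D(u)=\rk_C(u)$ (this is the rank analogue of Lemma~\ref{lem:upset}).
\item An isomorphism preserves rank. So an isomorphism $\downarrow_Ca\to\downarrow_Cc$ maps $a$, the unique maximum of the first ideal, to $c$, giving $\rk_C(a)=\rk_C(c)=h$.
\item It also gives, for each $k$, a bijection between the rank-$k$ points of $\downarrow_Ca$ and those of $\downarrow_Cc$, with ranks taken in $C$ by the previous item.
\item Each rank level $\{u:\rk_C(u)=k\}$ is an antichain, since $u<v$ forces $\rk_C(u)<\rk_C(v)$. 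So each level has at most three points.
\end{itemize}

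Next I would produce a rank-$k$ point below $c$. Take a longest chain $u_0<u_1<\dots<u_h=c$ ending at $c$. Its $i$-th element has rank exactly $i$: a longer chain below $u_i$ could be spliced with the chain $u_i<\cdots<u_h$ to lengthen a chain ending at $c$ beyond $h$. So $z=u_k$ is a point of rank $k$ with $z\le c$, and $z\notin\{x,y\}$ because $x,y$ are not below $c$.

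The contradiction then comes from counting. The rank-$k$ level contains the three distinct points $x,y,z$, so by the width bound it is exactly $\{x,y,z\}$. The ideal $\downarrow_Ca$ contains the two rank-$k$ points $x,y$. By the rank-preserving isomorphism, $\downarrow_Cc$ must also contain at least two rank-$k$ points. But its rank-$k$ points lie in $\{x,y,z\}\cap\downarrow_Cc=\{z\}$. This contradiction proves the lemma, and the symmetric statement follows by interchanging $a$ and $c$.

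I expect no serious obstacle. The only points needing care are that rank computed inside an ideal coincides with rank in $C$, so the isomorphism of ideals really matches the rank levels of $C$, and that a longest chain to $c$ meets every rank below $h$.
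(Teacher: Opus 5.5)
Your proof is correct and is essentially the paper's argument: both use that ranks inside a down-set agree with ranks in $C$, so the ideal isomorphism matches the numbers of rank-$k$ points below $a$ and below $c$; that these two sets cover the rank-$k$ level; and that the level has at most three points by the width bound. The auxiliary point $z$ from the longest chain is not needed: the isomorphism already gives at least two rank-$k$ points in $\downarrow_Cc$, both distinct from $x,y$, so the level would directly contain four points.
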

\begin{proof}
Let $C_k=\{x\in C:\rk_C(x)=k\}$ and put
\[
 A_k=C_k\cap\downarrow_Ca,\qquad T_k=C_k\cap\downarrow_Cc.
\]
The ideals are down-sets, so their internal ranks agree with the ranks in $C$. The isomorphism of the ideals implies $|A_k|=|T_k|$. Every point of $C$ lies below $a$ or $c$, so $A_k\cup T_k=C_k$. Since $|C_k|\le3$,
\[
 |A_k\setminus T_k|=|T_k\setminus A_k|\le1.
\]
Indeed, if both sides were at least two, the union would have at least four points.
\end{proof}

Return to $P$ under the hypotheses of Subcase~1.1. The new maximal element $c$ of $C=P-b$ is a lower cover of $b$. Every other lower cover $x$ of $b$ is not below $c$, otherwise $x<c<b$; it is not $a$, which is maximal in $P$; hence it lies below $a$ and has rank below $h$.

Once the identity of $c$ on the card has been chosen, every further lower cover is uniquely determined by its rank, by Lemma~\ref{lem:rankrepair}. The type of the ideal of $b$ gives the ranks of its lower covers, because ranks inside ideals agree with those in the ambient ordered set.
The required ideal data are available from
\cite[Theorem~2.8]{S03}, which reconstructs the isomorphism types of
the maximal ideals from the deck.

The correct procedure is therefore to reattach $b$ above \emph{all} lower covers prescribed by those ranks, in addition to $c$. There are still only the two choices for designating $c$ among the maximal elements of the card; what changes is that there may be two additional lower covers rather than only one.

\subsection{Continuation of the subcase}
For each of the two choices, perform the reattachment and compare the card obtained by deleting the other maximal element with the known card $P-a$. Candidates with incorrect numerical parameters or an incorrect ideal of the added point may also be discarded; the authentic candidate remains.

If an automorphism of $C$ swaps $a,c$, it also transports all additional lower covers, which are determined by rank and by not belonging to the ideal of the designated new maximal element. The two extensions are therefore isomorphic. If only one choice produces the required card, the authentic choice is determined.

Suppose then that both choices survive and that no automorphism of $C$ swaps $a,c$. An isomorphism between the two reconstructed cards maps the added point $b$ to the other added point: it is distinguished by the cardinality of its ideal, larger by at least two than $|\downarrow_Ca|$. Deleting that point gives
\begin{equation}\label{eq:repairphi}
 \Phi:C-a\longrightarrow C-c,\qquad\Phi(c)=a.
\end{equation}
The second equality holds because $c$ in the domain and $a$ in the codomain are the unique points with ideal cardinality $|\downarrow_Ca|$; all other points were strictly below one of the two maximal elements.

The hypotheses of Lemma~6.3 of \cite{S03} are therefore satisfied for
$C$, with the small cases already covered by Lemma~2.11 of the same
source. More explicitly, $w(C)=w(P)=3$: in an antichain of $P$
containing $b$, one may replace $b$ by $c$, since $c<b$ and $c$ is
maximal in $C$; the other points remain incomparable with $c$. If $C$
were disconnected, its two components would each have a greatest
element and would be the two isomorphic ideals; there would then be an
automorphism swapping $a,c$, contrary to the residual assumption.

The alternatives in Lemma~6.3 are handled as follows. If
$C=U\oplus V$ is a nontrivial ordinal sum, then $c\in V$ and $c<b$, so
$P=U\oplus(V+b)$ is also an ordinal sum; by
\cite[Theorem~2.4]{S03}, $P$ is reconstructible.
If $C$ has a dominating maximal
element, equality of the ideal cardinalities of $a,c$ implies that
both lie above all of $C\setminus\{a,c\}$; they can then be swapped,
contrary to the residual assumption.
In the last alternative, Lemma~6.3 yields an automorphism of
\[
C\setminus\Max(C)
=
P\setminus\bigl(\{b\}\cup\Max(P-b)\bigr)
\]
containing a $3$-cycle. Since the gap between the ideal cardinalities
is at least two and $P-b$ is identifiable, all the hypotheses of
Lemma~7.1 of \cite{S03} are satisfied, and that lemma reconstructs
$P$.

Thus the subcase is completed without assuming that $b$ has only two
lower covers.

\section*{Acknowledgments}

The main result was discovered by ChatGPT 6 Pro (OpenAI), which was
also used in developing the proof and preparing the manuscript.
The mathematical proofs were also subjected to an independent
adversarial review using Claude Fable 5.1 (Anthropic).
The author takes full responsibility for the correctness of the
mathematical statements and proofs, for the accuracy of the citations
and attributions, and for the final content of the manuscript.

\end{document}